\newtheorem{theorem}{Theorem}[section]
\newtheorem{lemma}[theorem]{Lemma}
\newtheorem{corollary}[theorem]{Corollary}
\theoremstyle{definition}
\newtheorem{example}[theorem]{Example}
\theoremstyle{remark}
\newtheorem{remark}[theorem]{Remark}
\newcommand{\N}{\mathbb{N}}
\newcommand{\F}{\mathbb{F}}
\newcommand{\pp}{\mathfrak{p}}
\newcommand{\Aut}{\text{Aut}}
\newcommand{\Gal}{\text{Gal}}
\newcommand{\End}{\text{End}}
\title{Explicit Weil-pairing for Drinfeld Modules}
\author{Jeff Katen}
\begin{document}
\maketitle
\begin{abstract}
	The goal of this article is to define an analogue of the Weil-pairing for Drinfeld modules using explicit formulas and to deduce its main properties from these formulas. Our result generalizes the formula given for rank 2 Drinfeld modules by van der Heiden in~\cite{vanderheiden} and works as a more explicit, elementary proof of the Weil-pairing's existence than the proof appearing in~\cite{vanderheiden}.
\end{abstract}
\section{Introduction} Let $q$ be a prime power and let $\F_q$ be the finite field with $q$ elements. Let $\overline{\F}_q$ be the algebraic closure of $\F_q$. Let $A=\F_q[T]$ be the polynomial ring over $\F_q$ with indeterminate $T$. Let $K$ be a field containing $\F_q$ as a subfield and equipped with an $\F_q$-algebra homomorphism $\gamma:A\rightarrow K$. Let $\tau$ be the Frobenius endomorphism of $\overline{K}$ with respect to $\F_q$ defined by $\alpha\mapsto \alpha^q$, and denote by $K\{\tau\}$ the noncommutative ring of polynomials in $\tau$ with coefficients in $K$ and the commutation rule $\tau \alpha = \alpha^q \tau$ for any $\alpha\in K$. \par
Let $r\in \N$. A Drinfeld module of rank $r$ is a ring homomorphism $\phi:A\rightarrow K\{\tau\}$, 
\begin{align*}
a \mapsto \phi_a = \gamma(a) + \sum_{i=1}^{r\cdot \deg(a)} g_i(a) \tau^i, \qquad g_{r\cdot \deg(a)}(a)\neq 0.
\end{align*}
Note that $\phi$ is a homomorphism of $\F_q$-algebras. Since $K\{\tau\}\subset \End_{\F_q}(\overline{K})$, $\overline{K}$ inherits an $A$-module structure given for $a\in A$ and $\beta\in \overline{K}$ by $a\cdot \beta = \phi_a(\beta)$. For $a\in A$, denote by $\phi[a]$ the $a$-torsion of $\phi$, i.e. $\phi[a]=\ker(\phi_a)\subset \overline{K}$. Note that $\phi[a]$ is an $A$-submodule of $\overline{K}$, because for $\beta\in \phi[a]$ and $b\in A$, we have $\phi_a(\phi_b(\beta))=\phi_b(\phi_a(\beta))=0$.\par
The $A$-characteristic of $K$ is defined to be $\text{char}_A(K) = \ker(\gamma)$. For $a\not\in\text{char}_A(K)$, $\phi_a(x)$ is a separable polynomial, and we have the isomorphism of $A$-modules
\begin{align*}
\phi[a]\cong \prod_{i=1}^r A\slash aA.
\end{align*}
In this case, the $A$-module $\phi[a]$ is naturally equipped with an action of the absolute Galois group $G_K =\Gal(K^{\text{sep}}\slash K)$, since $\phi[a]$ consists of the roots of the separable polynomial $\phi_a(x)$. The action commutes with the action of $A$ on $\phi[a]$ since $\phi_a$ has coefficients in $K$. Thus, we obtain a representation
\begin{align*}
\rho_{\phi,a}: G_K \rightarrow \Aut_A(\phi[a])\cong \text{GL}_r(A\slash aA).
\end{align*}
Composing with the determinant map $\det:\text{GL}_r(A\slash aA)\rightarrow (A\slash aA)^{\times} \cong \text{GL}_1(A\slash aA)$, we get a homomorphism
\begin{align*}
\pi_{\phi,a}: G_K\rightarrow \text{GL}_1(A\slash aA).
\end{align*}
It is a natural question to ask whether $\pi_{\phi,a}$ arises from the action of $G_K$ on the torsion points of a Drinfeld module $\psi$ of rank 1 . \par
The answer to this question is yes, and in fact
\begin{align*}
\psi_t = \gamma(t) + (-1)^{r-1} g_{r}(t) \tau.
\end{align*}
The fact that $\rho_{\psi,a} = \det \circ \rho_{\phi,a}$ follows from the existence of the \textit{Weil-pairing} for Drinfeld modules, which is a map
\begin{align*}
W_a:\prod_{i=1}^r \phi[a] \rightarrow \psi[a]
\end{align*} 
satisfying the following properties:\\[.5 em]
(1) The map $W_a$ is $A$-multilinear, i.e. it is $A$-linear in each component.\\[.5 em]
(2) It is alternating: if $\beta_l=\beta_h$ for some $l\neq h$, then $W_a(\beta_1,\ldots,\beta_r) = 0$.\\[.5 em]
(3) Assuming $a\not\in \text{char}_A(K)$, it is surjective and nondegenerate:
\begin{align*}
\text{if }W_a(\beta_1,\ldots,\beta_r)=0 \quad\text{for all }\beta_1,\ldots,\beta_{i-1},\beta_{i+1},\ldots,\beta_r\in \phi[a], \quad\text{then } \beta_i=0.
\end{align*}
(4) It is Galois invariant:
\begin{align*}
\sigma W_a(\beta_1,\ldots,\beta_r) = W_a(\sigma\beta_1,\ldots,\sigma\beta_r) \quad \text{for all } \sigma\in \Gal(\overline{K}\slash K).
\end{align*}		
(5) It satisfies the following compatibility condition for $a,b\not \in \text{char}_A(K)$ and $\beta_1,\ldots,\beta_r\in \phi[ab]$:
\begin{align*}
\psi_{b}\big(W_{ab}(\beta_1,\ldots,\beta_r)\big)= W_a(\phi_b(\beta_1),\ldots,\phi_b(\beta_r)).
\end{align*}
The existence of $W_a$ was proven by van der Heiden in~\cite{vanderheiden} using the machinery of Anderson motives.
\begin{remark}
	Let $\pp=\text{char}_A(K)$. Let $\mathfrak{l}\neq \pp$ be a nonzero prime ideal in $A$, and denote the $\mathfrak{l}$-adic Tate module of $\phi$ by $T_\mathfrak{l}(\phi) = \lim \limits_{\longleftarrow} \phi[l]$. Property (5) says that the Weil-pairing extends to a map
	\begin{align*}
	W_{\mathfrak{l}^\infty}:\prod_{i=1}^r T_\mathfrak{l}(\phi)\rightarrow T_\mathfrak{l}(\psi),
	\end{align*}
	and it can be shown that this map inherits the properties (1) - (4). The Galois group $G_K$ acts on $T_\mathfrak{l}(\phi)$, so again we obtain a representation
	\begin{align*}
	\widehat{\rho}_{\phi,\mathfrak{l}}:G_k\rightarrow \Aut_{A_\mathfrak{l}}(T_\mathfrak{l}(\phi))\cong \text{GL}_r(A_\mathfrak{l}).
	\end{align*} As in the case of $W_a$, the existence of the map $W_\infty$ implies that 
	\begin{align*}
	\det \circ\widehat{\rho}_{\phi,\mathfrak{l}} = \widehat{\rho}_{\psi,\mathfrak{l} }.
	\end{align*}
\end{remark}
\begin{remark}
Let $\phi$ be a Drinfeld module of rank $r$ over the finite field $k\cong\F_{q^n}$, and let $\pp$ be the monic generator of $\text{char}_A(k)$. Denote the Frobenius automorphism of $\overline{k}$ by $\pi=\tau^n$. Let $P(x)$ be the characteristic polynomial of $\pi$ acting on $T_\mathfrak{l}(\phi)$. One uses the existence of the map $W_\infty$ mentioned above to deduce an explicit formula for $P(0)$ given by
\begin{align*}
P(0) = (-1)^{rn -r-n}\cdot \text{Nr}_{k\slash \F_q}(g_r)^{-1} \cdot \pp^{n\slash d}.
\end{align*}
See~\cite{Gek}.
\end{remark}
The goal of this paper is to give an alternative proof of the existence of $W_a$ with the required properties using explicit formulas. In~\cite{vanderheiden}, $W_a$ is shown to exist, but is only given explicitly for the case where $r=2$. The advantage of our approach is that we arrive at an explicit formula for $W_a$, in arbitrary rank $r$, as a polynomial in $K[x_1,\ldots,x_r]$; this formula matches van der Heiden's in the case where $r=2$. Moreover, because of the nature of our approach, the proof that we obtain is completely elementary, in the sense that no theory of Anderson motives is used. \par
The inspiration for our approach and some of the arguments come from Papikian's notes~\cite{Pap}, where van der Heiden's formula for $W_a$ in $r=2$ is used to derive the properties of $W_a$ in this case. Additionally, a formula for $r\geq 2$ and $a=T^n$ is given in~\cite{Pap}. As we explain in \S3, our formula given for $W_a$ recovers both the formula for $a=T^n$ given in~\cite{Pap} and the formula for $r=2$ given in~\cite{vanderheiden}. Now we explain in more detail our approach and the structure of our paper. \par
The compatibility condition (5) and the $\F_q$-multilinearity of $W_a$ together imply that for $c\in \F_q$, $W_{ca}(\beta_1,\ldots,\beta_r) = c^{r-1}W_a(\beta_1,\ldots,\beta_r)$, so it is enough to give a formula for $W_a$ when $a$ is monic of degree $\geq 1$. In \S 3, we describe an interesting class of symmetric polynomials $\{f_a\}\subset \overline{\F}_q[T_1,\ldots, T_r]$ with $f_a$ given for each monic $a\in \overline{\F}_q[T]$ with degree $\geq 1$. The polynomials are defined explicitly in \S3, and we will see that if we factor $a$ over $\overline{\F}_q$ as $a=(T-\alpha_1)\cdots(T-\alpha_n)$, the polynomials also satisfy the recursive formula for $1\leq l\leq r$,
\begin{align*}
f_a(T_1,\ldots,T_r) = \bigg(\prod_{\substack{j=1\\j\neq l}}^r(T_j-\alpha_1)\bigg) f_{a\slash(T-\alpha_1)}(T_1,\ldots,T_r) &+\bigg(\prod_{i=2}^n(T_l-\alpha_i)\bigg) f_a(T_1,\ldots,\widehat{T}_l,\ldots,T_r)
\end{align*}
whenever $r\geq 2$ and $n:=\deg(a)\geq 2$. If $n=1$ or $r=1$, then $f_a$ is the constant 1 polynomial. If $a\in A$, then $f_a\in \F_q[T_1,\ldots, T_r]$.\par
In \S4, we will show that a formula for $W_a$ incorporates the coefficients of $f_a$ and the Moore determinant
\begin{align*}
M(x_1,\ldots,x_r)=\det\begin{pmatrix}
x_1 & x_1^q &\cdots & x_1^{q^{r-1}}\\
x_2 & x_2^q &\cdots & x_2^{q^{r-1}}\\
\vdots & \vdots & \cdots &\vdots\\
x_r & x_r^q & \cdots & x_r^{q^{r-1}}
\end{pmatrix}.
\end{align*}
Namely, denote
\begin{align*}
f_a(T_1,\ldots, T_r) = \sum_{\substack{\mathbf{i}=(i_1,\ldots,i_r)\\0\leq i_1,\ldots, i_r\leq n-1}} a_\mathbf{i}T_1^{i_1}\cdots T_r^{i_r}, \qquad a_\mathbf{i}\in \F_q.
\end{align*} 
Then, a formula for the Weil-pairing $W_a$ with the desired properties is the following polynomial.
\begin{align*}
W_a(x_1,\ldots,x_r) = \sum_{\substack{\mathbf{i}=(i_1,\ldots,i_r)\\0\leq i_1,\ldots, i_r\leq n-1}} a_\mathbf{i}M(\phi_{T^{i_1}}(x_1),\ldots,\phi_{T^{i_r}}(x_r)).
\end{align*}
The fact that $W_a$ satisfies the five properties listed above will follow from the various properties and congruence conditions developed in \S3 for the polynomials $\{f_a\}$.\par
In the future, we are planning on considering other applications of the polynomials $\{f_a\}$ to the theory of Drinfeld modules. For example, they may relate to the theory of zeta-values of Drinfeld modules.

\section{Notation and preliminaries}
Given an $\F_q$-module $X$, we set $X^*=\overline{\F}_q\otimes_{\F_q} X$, so that $X^*$ is obtained from $X$ through extension of scalars to $\overline{\F}_q$. Given an $\F_q$-module homomorphism (resp. $\F_q$-algebra homomorphism) $\Phi:X\rightarrow Y$, there is an associated $\overline{\F}_q$-module homomorphism (resp. $\overline{\F}_q$-algebra homomorphism), $\Phi:X^*\rightarrow Y^*$, given for elementary tensors by
\begin{align*}
\Phi(c\otimes x)= c\otimes \Phi(x)
\end{align*} and then extended linearly. Since $A^*\cong \overline{\F}_q[T]$, we can extend any Drinfeld module $\phi$ in this way to a homomorphism of $\overline{\F}_q$-algebras $\phi:\overline{\F}_q[T]\rightarrow (K\{\tau\})^*$. \par
Let $K[x_1,\ldots,x_r]$ be the ring of polynomials with coefficients in $K$ and indeterminates $x_1,\ldots, x_r$. Let $K\langle x_1,\ldots,x_r\rangle$ be the subset of these polynomials with $q$-power exponents, which are the polynomials of the form
\begin{align*}
p(x_1,\ldots,x_r) = \sum_{i=1}^n \alpha_i x_1^{q^{j_{i,1}}}\cdots x_r^{q^{j_{i,r}}}, \qquad n\in \N,\; \alpha_i\in K,\;j_{i,\l}\in \N\cup \{0\}.
\end{align*}
For each polynomial $p\in K[x_1,\ldots,x_r]$, there is a function $p:(\overline{K})^r \rightarrow \overline{K}$ given by
\begin{align*}
(\beta_1,\ldots,\beta_r)\mapsto p(\beta_1,\ldots,\beta_r).
\end{align*}  \par
For any $p\in K\langle x_1,\ldots, x_r\rangle$, we can verify that $p:(\overline{K})^r\rightarrow \overline{K}$ is $\F_q$-multilinear, i.e. for $\beta_1,\ldots,\beta_{i-1},\beta_{i+1},\beta_r\in \overline{K}$, the polynomial $g(x)=p(\beta_1,\ldots,\beta_{i-1},x,\beta_{i+1},\ldots,\beta_r)$ is $\F_q$-linear. (This follows because $g(x)$ has $q$-power exponents, hence it is $\F_q$-linear.) \par
By the universal property of tensor products, given any multilinear polynomial $p\in K\langle x_1,\ldots,x_r\rangle$, there is a unique $\F_q$-linear map $\widehat{p}:\bigotimes_{i=1}^r (\overline{K}) \rightarrow \overline{K}$ such that $p(x_1,\ldots,x_r)=\widehat{p}(x_1\otimes\cdots\otimes x_r)$, which can be extended to an $\overline{\F}_q$-linear map
\begin{align*}
\widehat{p}:\bigg(\bigotimes_{i=1}^r (\overline{K})\bigg)^*\rightarrow \overline{K}^*
\end{align*}
by the discussion above.\par
Let $M\in K\langle x_1,\ldots,x_r\rangle$ be the Moore determinant, given by
\begin{align*}
M(x_1,\ldots,x_r)=\det\begin{pmatrix}
x_1 & x_1^q &\cdots & x_1^{q^{r-1}}\\
x_2 & x_2^q &\cdots & x_2^{q^{r-1}}\\
\vdots & \vdots & \cdots &\vdots\\
x_r & x_r^q & \cdots & x_r^{q^{r-1}}
\end{pmatrix}.
\end{align*}
The induced map $\widehat{M}:\big(\bigotimes_{i=1}^r (\overline{K})\big)^*\rightarrow \overline{K}^*$ will play a key role in \S 4.\par
From the homomorphism $\phi: \overline{\F}_q[T]\rightarrow (K\{\tau\})^*$, we can build the homomorphism of $\overline{\F}_q$-algebras
\begin{align*}
\bigg(\bigotimes_{i=1}^r \phi\bigg) : \bigg(\bigotimes_{i=1}^r \overline{\F}_q[T]\bigg) \rightarrow \bigg(\bigotimes_{i=1}^r (K\{\tau\})^*\bigg),\\
a_1\otimes \cdots \otimes a_r \mapsto \phi(a_1)\otimes\cdots \otimes \phi(a_r),
\end{align*}
where the tensors are over $\overline{\F}_q$. These tensors make sense since $\overline{\F}_q$ belongs to the center of $\overline{\F}_q[T]$ and $K\{\tau\}^*$.
We also have isomorphisms of $\overline{\F}_q$-algebras
\begin{align*}
\bigotimes_{i=1}^r \overline{\F}_q[T] \cong \overline{\F}_q[T_1,\ldots,T_r] \qquad \text{and} \qquad \bigotimes_{i=1}^r (K\{\tau\})^*\cong \bigg(\bigotimes_{i=1}^r (K\{\tau\})\bigg)^*,
\end{align*}
where now $\bigotimes_{i=1}^r (K\{\tau\})$ is the tensor product over $\F_q$. Hence, the map $\bigotimes_{i=1}^r \phi$ induces a homomorphism of $\overline{\F}_q$-algebras
\begin{align*}
\overline{\F}_q[T_1,\ldots,T_r] \rightarrow \bigg(\bigotimes_{i=1}^r (K\{\tau\})\bigg)^*,
\end{align*}
which extends to the chain of $\overline{\F}_q$-algebra homomorphisms
\begin{align*}
\overline{\F}_q[T_1,\ldots,T_r] \rightarrow \bigg(\bigotimes_{i=1}^r (K\{\tau\})\bigg)^*\hookrightarrow \End_{\overline{\F}_q}\bigg(\bigg(\bigotimes_{i=1}^r {\overline{K}}\bigg)^*\bigg),
\end{align*} 
through the action of $\overline{\F}_q$ on the $\overline{\F}_q$-component of $\big(\bigotimes_{i=1}^r {\overline{K}}\big)^*$ by scaling and the action of $K\{\tau\}$ on $\overline{K}$ given for monomials by $\alpha\tau \cdot \beta = \alpha\beta^q$.\par
We conclude that there is a natural action of $\overline{\F}_q[T_1,\ldots,T_r]$ on $\big(\bigotimes_{i=1}^r {\overline{K}}\big)^*$ induced by $\phi$ given for monomials on elementary tensors by
\begin{align*}
(\alpha T_1^{j_1}\cdots T_r^{j_r}) \cdot 1 \otimes \beta_1 \otimes \cdots \otimes \beta_r = \alpha\otimes \phi_{T^{j_1}}(\beta_1) \otimes \cdots \otimes \phi_{T^{j_r}}(\beta_r).
\end{align*}
This action furthermore restricts to an action of $\F_q[T_1,\ldots,T_r]$ on $\bigotimes_{i=1}^r \overline{K}$.
\section{A special class of polynomials in $\F_q[T_1,\ldots,T_r]$} 
For $r,n\in \N$ and monic $a\in \overline{\F}_q[T]$ with $\deg(a)=n$, we factor $a$ into linear factors, say $a(T)=(T-\alpha_1)\cdots(T-\alpha_n)$ for $\alpha_1,\ldots,\alpha_n\in \overline{\F}_q$ (not necessarily distinct). Define
\begin{align*}
f_a(T_1,\ldots,T_r)&:= \sum_{1=i_0\leq\cdots \leq i_r =n} \prod_{j=1}^r \frac{a(T_j)}{(T_j-\alpha_{i_{j-1}})\cdots (T_j-\alpha_{i_j})}\\
&=\sum_{1=i_0\leq\cdots \leq i_r =n} \prod_{j=1}^r \prod_{\substack{i=1\\i\not\in[i_{j-1},i_j]}}^n (T_j-\alpha_i),
\end{align*}
so that $f_a\in \overline{\F}_q[T_1,\ldots,T_r]$. Note that for fixed $1\leq j\leq r$, we are allowing $i_{j-1}=i_j$ in the summation indexing for $f_a$: if this is the case, the $j$-th factor of the corresponding product will be
\begin{align*}
\frac{a(T_j)}{T_j-\alpha_{i_{j-1}}} = \frac{a(T_j)}{T_j-\alpha_{i_{j}}}.
\end{align*} Examples follow.
\begin{example}\label{fainitialcases}
	If either $r=1$ or $n=1$, then $f_a$ is the constant $1$ polynomial. To see this, first suppose that $r=1$. Then,
	\begin{align*}
	f_a(T_1) = \frac{a(T_1)}{(T_1 - \alpha_1)\cdots (T_1-\alpha_{n}) }=1.
	\end{align*}
	Now suppose that $n=1$. Then, $a(T) = T-\alpha_1$, and
	\begin{align*}
	f_a(T_1,\ldots,T_r) = \prod_{j=1}^r \bigg(\frac{T_j - \alpha_1}{T_j-\alpha_1}\bigg)=1.
	\end{align*}
\end{example}
\begin{example}
	We compute $f_a$ in the case that $a(T)=T^n$. We start by noting that $\alpha_1=\cdots=\alpha_n=0$. Therefore,
	\begin{align*}
	f_a(T_1,\ldots,T_r) = \sum_{1=i_0\leq\cdots \leq i_r =n} \prod_{j=1}^r T_j^{n-(i_j-i_{j-1}+1)}.
	\end{align*}
	Thus, each monomial of $f_a$ is of the form $T_1^{j_1}\cdots T_r^{j_r}$, where $0\leq j_1, \ldots, j_r\leq n-1$ and
	\begin{align*}
	j_1+\cdots+j_r &= r(n-1)+(-i_r+i_{r-1})+(-i_{r-1}+i_{r-2})+\cdots+(-i_1+i_0)\\
	&=r(n-1) -n+1 =(r-1)(n-1).
	\end{align*}
	Furthermore, any monomial satisfying these conditions appears as a monomial of $f_a$, so we can write
	\begin{align*}
	f_a(T_1,\ldots,T_r) = \sum_{\substack{0\leq j_1,\ldots,j_r\leq n-1\\j_1+\cdots+j_r=(r-1)(n-1)}} T_1^{j_1}\cdots T_r^{j_r}.
	\end{align*}
\end{example}
\begin{example}
	We compute $f_a$ when $n=2$. Say $a=(T-\alpha_1)(T-\alpha_2)=T^2+a_1T+a_0$. Then,
	\begin{align*}
	f_a(T_1,\ldots,T_r)&=\sum_{i=1}^r (T_1-\alpha_2)(T_2-\alpha_2)\cdots(T_{i-1}-\alpha_2)(T_{i+1}-\alpha_1)\cdots(T_r-\alpha_1)\\
	&=\sum_{s=1}^r \bigg((-1)^{s-1}(\alpha_1^{s-1}+\alpha_1^{s-2}\alpha_2+\cdots+\alpha_1\alpha_2^{s-2}+\alpha_2^{s-1})\sum_{\substack{0\leq j_1,\ldots,j_r\leq 1\\j_1+\cdots+j_r=r-s}}T_1^{j_1}\cdots T_r^{j_r}\bigg).
	\end{align*}
	In this example, there is a recursive formula relating the coefficients of $f_a$. Note that the coefficients of $a$ are given by $a_1=-\alpha_1-\alpha_2$ and $a_0=\alpha_1\alpha_2$. Let $g(1)=1$, let $g(2)=a_1$, and define $g(s+2)=a_1g(s+1)-a_0g(s)$ for each $s\in \N$. By induction, it is easily seen that
	\begin{align*}
g(s)=(-1)^{s-1}(\alpha_1^{s-1}+\alpha_1^{s-2}\alpha_2+\cdots+\alpha_1\alpha_2^{s-2}+\alpha_2^{s-1}),
	\end{align*} so that
	\begin{align*}
	f_a(T_1,\ldots,T_r)=\sum_{s=1}^r \bigg(g(s)\sum_{\substack{0\leq j_1,\ldots,j_r\leq 1\\j_1+\cdots+j_r=r-s}}T_1^{j_1}\cdots T_r^{j_r}\bigg).
	\end{align*}
\end{example}
\begin{example}
	We compute $f_a$ when $n=3$ and $r=3$. Write $a(T)=(T-\alpha_1)(T-\alpha_2)(T-\alpha_3)=T^3+a_2T^2+a_1T+a_0$. First, we list the summation indices for $f_a$: There are 6 tuples $(i_0,i_1,i_2,i_3)$ with $0=i_0\leq i_1\leq i_2\leq i_3=3$, namely
	\begin{align*}
	(1,1,1,3),\, (1,1,2,3), \,(1,1,3,3),\, (1,2,2,3),\,(1,2,3,3), \,\text{and}\, (1,3,3,3).
	\end{align*}
	Thus,
	\begin{align*}
	f_a(T_1,T_2,T_3)=& (T_1-\alpha_2)(T_1-\alpha_3)(T_2-\alpha_2)(T_2-\alpha_3)\\
	+&(T_1-\alpha_2)(T_1-\alpha_3)(T_2-\alpha_3)(T_3-\alpha_1)\\
	+&(T_1-\alpha_2)(T_1-\alpha_3)(T_3-\alpha_1)(T_3-\alpha_2)\\
	+&(T_1-\alpha_3)(T_2-\alpha_1)(T_2-\alpha_3)(T_3-\alpha_1)\\
	+&(T_1-\alpha_3)(T_2-\alpha_1)(T_3-\alpha_1)(T_3-\alpha_2)\\
	+&(T_2-\alpha_1)(T_2-\alpha_2)(T_3-\alpha_1)(T_3-\alpha_2).
	\end{align*}
	If we distribute each of these six terms and use the fact that $a_2=-(\alpha_1+\alpha_2+\alpha_3)$, $a_1=\alpha_1\alpha_2 + \alpha_1\alpha_3 + \alpha_2\alpha_3$, and $a_0=-\alpha_1\alpha_2\alpha_3$, then we will arrive at
	\begin{align*}
	f_a(T_1,T_2,T_3) =& T_1^2T_2^2 + T_1^2T_2T_3 +T_1^2T_3^2+ T_1T_2^2T_3 + T_1T_2T_3^2 + T_2^2T_3^2 \\
	+&a_2(T_1^2T_2 +T_1^2T_3+ T_1T_2^2+T_1T_3^2+ T_2^2T_3+T_2T_3^2)\\
	+&2a_2T_1T_2T_3\\
	+&a_1(T_1^2+T_2^2+T_3^2)\\
	+&a_2^2(T_1T_2+T_1T_3+T_2T_3)\\
	+&(a_1a_2-a_0)(T_1+T_2+T_3)\\
	+&(a_1^2-a_0a_2).
	\end{align*}
\end{example}
For these examples, note that $f_a$ is symmetric in its variables $T_1, \ldots, T_r$ and that if the coefficients of $a$ lie in $\F_q$, then the coefficients of $f_a$ do as well. The following lemmas and theorems will show that these properties carry over to the general case, and in them we will prove some other notable properties of the polynomials $f_a$.  At the end of the section, we will compute $f_a$ in the case where $r=2$.
\begin{lemma}\label{congruence}
	Let $I$ be the ideal of $\overline{\F}_q[T_1,\ldots,T_r]$ generated by $\{a(T_j):1\leq j\leq r\}$. Let $l,h\in \{1,\ldots,r\}$. We have the following congruence:
	\begin{align*}
	T_l f_a(T_1,\ldots,T_r)\equiv T_h f_a(T_1,\ldots,T_r) \quad(\text{\emph{mod}}\; I).
	\end{align*}
	\begin{proof}
		It suffices to prove that the statement holds for $l>1$ and $h=l-1$. Notice that
		\begin{align*}
		T_lf_a(T_1,\ldots,T_r) =& \sum_{1=i_0\leq\cdots \leq i_r =n} \frac{a(T_l)[(T_l-\alpha_{i_{l-1}})+\alpha_{i_{l-1}}]}{(T_l-\alpha_{i_{l-1}})\cdots (T_l-\alpha_{i_l})} \prod_{\substack{j=1\\j\neq l}}^r \frac{a(T_j)}{(T_j-\alpha_{i_{j-1}})\cdots (T_j-\alpha_{i_j})}\\
		=&\sum_{1=i_0\leq\cdots \leq i_r =n} \frac{a(T_l)(T_l-\alpha_{i_{l-1}})}{(T_l-\alpha_{i_{l-1}})\cdots (T_l-\alpha_{i_l})} \prod_{\substack{j=1\\j\neq l}}^r \frac{a(T_j)}{(T_j-\alpha_{i_{j-1}})\cdots (T_j-\alpha_{i_j})}\\
		&\hspace{.7 cm}+ \sum_{1=i_0\leq\cdots \leq i_r =n} \frac{a(T_{l-1})\alpha_{i_{l-1}}}{(T_{l-1}-\alpha_{i_{l-2}})\cdots (T_{l-1}-\alpha_{i_{l-1}})} \prod_{\substack{j=1\\j\neq l-1}}^r \frac{a(T_j)}{(T_j-\alpha_{i_{j-1}})\cdots (T_j-\alpha_{i_j})}.
		\end{align*}
		The first sum belongs to $I$ when $i_{l-1}=i_l$, so we can change the summation index (rename $i_{l-1}+1$ to $i_{l-1}$) to get
		\begin{align*}
		\equiv&\sum_{\substack{1=i_0\leq\cdots \leq i_r =n\\i_{l-1}\neq i_{l-2}}} \frac{a(T_{l-1})(T_{l-1}-\alpha_{i_{l-1}})}{(T_{l-1}-\alpha_{i_{l-2}})\cdots (T_{l-1}-\alpha_{i_{l-1}})} \prod_{\substack{j=1\\j\neq l-1}}^r \frac{a(T_j)}{(T_j-\alpha_{i_{j-1}})\cdots (T_j-\alpha_{i_j})}\\
		&\hspace{.7 cm}+ \sum_{1=i_0\leq\cdots \leq i_r =n} \frac{a(T_{l-1})\alpha_{i_{l-1}}}{(T_{l-1}-\alpha_{i_{l-2}})\cdots (T_{l-1}-\alpha_{i_{l-1}})} \prod_{\substack{j=1\\j\neq l-1}}^r \frac{a(T_j)}{(T_j-\alpha_{i_{j-1}})\cdots (T_j-\alpha_{i_j})} \quad(\text{mod} \;I).
		\end{align*}
		Up to addition by an element of $I$ we can also remove the condition $i_{l-1}\neq i_{l-2}$ from the first sum. Recombining the two sums, we arrive at
		\begin{align*}
		T_lf_a(T_1,\ldots,T_r)\equiv T_{l-1}\sum_{1=i_0\leq\cdots \leq i_r =n}  \prod_{j=1}^r \frac{a(T_j)}{(T_j-\alpha_{i_{j-1}})\cdots (T_j-\alpha_{i_j})}\quad(\text{mod}\; I),
		\end{align*}
		which completes the proof.
	\end{proof}
\end{lemma}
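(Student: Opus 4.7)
The plan is to reduce the problem to showing $T_l f_a \equiv T_{l-1} f_a \pmod{I}$ for each $l > 1$; the general statement for arbitrary $h \neq l$ then follows by chaining these adjacent congruences.

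To handle the adjacent case, I would write $T_l = (T_l - \alpha_{i_{l-1}}) + \alpha_{i_{l-1}}$ inside each summand of $T_l f_a$. This splits $T_l f_a$ into two pieces: one in which the factor $(T_l - \alpha_{i_{l-1}})$ cancels the leading denominator factor in the $l$-th position of each summand, and one in which a scalar $\alpha_{i_{l-1}}$ remains. In the first piece, I would reindex the summation by shifting $i_{l-1}$ by one, so that the cancellation effectively migrates from the $l$-th position to the $(l-1)$-th position and the $l$-th factor is restored to its original form in $f_a$. In the second piece, the scalar $\alpha_{i_{l-1}}$ can be rewritten as a numerator contribution attached to the $(l-1)$-th factor. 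The goal is that these two manipulated sums share the same summation index set and recombine to give $T_{l-1} f_a$.

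The essential role of the modulus $I$ is to discard the boundary summands where the index shift runs outside the allowed range. Summands with $i_{l-1} = i_l$ in the first piece have an $l$-th factor that collapses to $a(T_l) \in I$, so they vanish modulo $I$; after reindexing, the corresponding missing summands with $i_{l-1} = i_{l-2}$ have an $(l-1)$-th factor containing $a(T_{l-1}) \in I$, so they can be freely added back in. This interplay between cancellation on one side and vanishing modulo $I$ on the other is what allows the identity to land correctly.

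The main obstacle I anticipate is bookkeeping: tracking the monotonicity constraint $i_{l-2} \leq i_{l-1} \leq i_l$ carefully through the reindexing, keeping straight which factor of the product is being modified in each piece, and identifying precisely which boundary summands get absorbed into $I$. Once this indexing is arranged properly, the remaining algebra is a direct comparison of two sums whose summation index sets and individual terms match term by term.
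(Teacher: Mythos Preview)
Your proposal is correct and follows essentially the same approach as the paper's proof: reduce to adjacent indices, split $T_l$ as $(T_l-\alpha_{i_{l-1}})+\alpha_{i_{l-1}}$, reindex the first piece by shifting $i_{l-1}$, absorb the boundary terms (those with $i_{l-1}=i_l$ before reindexing and $i_{l-1}=i_{l-2}$ after) into $I$ via the factors $a(T_l)$ and $a(T_{l-1})$, and recombine the two pieces to obtain $T_{l-1}f_a$. Your identification of the bookkeeping as the only real obstacle is accurate; the paper's write-up confirms that once the index shift and boundary terms are tracked carefully, the two sums match term by term.
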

\begin{corollary}\label{congruence2}
	Suppose $f_{a\slash (T-\alpha_1)}$ is defined using the same ordering of roots as $f_a$, so that
	\begin{align*}
	f_{a\slash (T-\alpha_1)}=\sum_{2=i_0\leq\cdots \leq i_r =n} \prod_{j=1}^r \frac{a(T_j)\slash (T_j-\alpha_1)}{(T_j-\alpha_{i_{j-1}})\cdots (T_j-\alpha_{i_j})}.
	\end{align*}Then, for $l\in \{1,\ldots,r\}$, we have
	\begin{align*}
	(T_l-\alpha_1)f_a(T_1,\ldots,T_r) \equiv \bigg(\prod_{j=1}^r(T_j-\alpha_1)\bigg) f_{a\slash(T-\alpha_1)}(T_1,\ldots,T_r) \quad (\text{\emph{mod}} \; I).
	\end{align*}
\end{corollary}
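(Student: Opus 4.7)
The plan is to reduce to the single case $l=1$ using Lemma~\ref{congruence}, and then verify the identity for $l=1$ by directly manipulating the defining sum of $f_a$. For the reduction: Lemma~\ref{congruence} gives $T_l f_a \equiv T_1 f_a \pmod{I}$ for every $l$, and subtracting $\alpha_1 f_a$ from both sides yields $(T_l - \alpha_1) f_a \equiv (T_1 - \alpha_1) f_a \pmod{I}$. So only the case $l=1$ needs to be addressed.

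For the main computation, I would expand $(T_1 - \alpha_1) f_a$ by writing each summand of $f_a$ in the product form $\prod_{j=1}^r \prod_{i \notin [i_{j-1}, i_j]} (T_j - \alpha_i)$ and multiplying the $j=1$ factor by $(T_1 - \alpha_1)$. The key observation is that when $i_1 = 1$, the resulting first factor becomes the full product $\prod_{i=1}^n (T_1 - \alpha_i) = a(T_1) \in I$, so that term vanishes modulo $I$; meanwhile when $i_1 \geq 2$, the inserted factor $(T_1-\alpha_1)$ fits into the product so that the first factor becomes $(T_1-\alpha_1) \prod_{i \in [2,n] \setminus [2,i_1]} (T_1 - \alpha_i)$, which is exactly what one expects after restricting the root set to $\{\alpha_2,\ldots,\alpha_n\}$.

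Simultaneously, in every factor with $j \geq 2$, the surviving summands have $i_{j-1} \geq i_1 \geq 2$, so $(T_j - \alpha_1)$ already appears as one of the factors of $\prod_{i \notin [i_{j-1}, i_j]} (T_j - \alpha_i)$ and can be pulled out explicitly. After extracting the $r$ factors $(T_j - \alpha_1)$ for $j=1,\ldots,r$ and shifting the summation bound from $i_0 = 1$ to $i_0 = 2$, the residual sum matches the displayed formula for $f_{a/(T-\alpha_1)}$ term-for-term, yielding the desired congruence.

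The main obstacle is bookkeeping: one must check that the extraction of $(T_j-\alpha_1)$ from each of the $r$ factors, together with the shift of the lower summation bound and the killing of the $i_1 = 1$ terms by membership in $I$, reassembles exactly the sum defining $f_{a/(T-\alpha_1)}$. There is no conceptual difficulty beyond this index manipulation, which is essentially a streamlined version of the rearrangement already performed inside the proof of Lemma~\ref{congruence}.
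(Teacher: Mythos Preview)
Your proposal is correct and follows essentially the same route as the paper: reduce to $l=1$ via Lemma~\ref{congruence} by subtracting $\alpha_1 f_a$, kill the $i_1=1$ terms because multiplying by $(T_1-\alpha_1)$ reconstitutes $a(T_1)\in I$ in the first factor, and then re-index the surviving sum (pulling out one copy of $(T_j-\alpha_1)$ from each factor) to recognize $\bigl(\prod_j (T_j-\alpha_1)\bigr)f_{a/(T-\alpha_1)}$. The only cosmetic difference is that you work with the product form $\prod_{i\notin[i_{j-1},i_j]}(T_j-\alpha_i)$ while the paper keeps the quotient form $a(T_j)/\prod_{i\in[i_{j-1},i_j]}(T_j-\alpha_i)$.
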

\begin{proof}
	By Lemma \ref{congruence},
	\begin{align*}
	T_lf_a(T_1,\ldots,T_r) &\equiv T_1f_a(T_1,\ldots,T_r) \quad (\text{mod}\; I).
	\end{align*}
	Subtracting $\alpha_1f_a(T_1,\ldots,T_r)$ on both sides of this congruence, we arrive at
	\begin{align*}
	(T_l-\alpha_1)f_a(T_1,\ldots,T_r) &\equiv (T_1-\alpha_1)f_a(T_1,\ldots,T_r) \quad (\text{mod}\; I).
	\end{align*}
	Whenever $0= i_0 = i_1\leq i_2 \leq \cdots\leq i_r =n $, we have
	\begin{align*}
	(T_1-\alpha_1)\prod_{j=1}^r \frac{a(T_j)}{(T_j-\alpha_{i_{j-1}})\cdots (T_j-\alpha_{i_j})} \in I.
	\end{align*}
	We conclude that
	\begin{align*}
	(T_l-\alpha_1)f_a(T_1,\ldots,T_r)&\equiv (T_1-\alpha_1)\sum_{\substack{1=i_0\leq\cdots \leq i_r =n\\i_0\neq i_1}} \prod_{j=1}^r \frac{a(T_j)}{(T_j-\alpha_{i_{j-1}})\cdots (T_j-\alpha_{i_j})} \quad (\text{mod}\; I)\\
	&\equiv \sum_{2=i_0\leq\cdots \leq i_r =n} \prod_{j=1}^r \frac{a(T_j)}{(T_j-\alpha_{i_{j-1}})\cdots (T_j-\alpha_{i_j})} \quad (\text{mod}\; I)\\
	&=\bigg(\prod_{j=1}^r(T_j-\alpha_1)\bigg) f_{a\slash(T-\alpha_1)}(T_1,\ldots,T_r),	\end{align*}
	completing the proof.
\end{proof}
\begin{theorem}\label{fatheorem}
	The polynomial $f_a\in \overline{\F}_q[T_1,\ldots,T_r]$ satisfies the following properties.\\
	(1) For $r,n\geq 2$ and $1\leq l\leq r$,
	\begin{align*}
	f_a(T_1,\ldots,T_r) = \bigg(\prod_{\substack{j=1\\j\neq l}}^r(T_j-\alpha_1)\bigg) f_{a\slash(T-\alpha_1)}(T_1,\ldots,T_r) &+\bigg(\prod_{i=2}^n(T_l-\alpha_i)\bigg) f_a(T_1,\ldots,\widehat{T}_l,\ldots,T_r).
	\end{align*}
	(2) The polynomial $f_a(T_1,\ldots,T_r)$ is symmetric in its variables $T_1,\ldots, T_r$.\\
	(3) Let $S_n$ denote the symmetric group on $n$ elements. For $\sigma\in S_n$, denote
	\begin{align*}
	f_a^\sigma(T_1,\ldots,T_r) := \sum_{1=i_0\leq\cdots \leq i_r =n} \prod_{j=1}^r \frac{a(T_j)}{(T_j-\alpha_{\sigma(i_{j-1})})\cdots (T_j-\alpha_{\sigma(i_j)})}.
	\end{align*}
	Then, the equality $f_a = f_a^\sigma$ holds. In other words, $f_a$ does not depend on the order in which we labeled the roots $\alpha_1,\ldots,\alpha_n$.\\
	(4) If $a\in A$, then all coefficients of the polynomial $f_a$ lie in $\F_q$.
\end{theorem}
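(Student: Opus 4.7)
The proof begins by establishing (1) in the special case $l = 1$ directly from the definition, and then uses this recursion as the engine for a simultaneous induction on $n + r$ that proves the remaining properties.

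For the $l = 1$ case of (1), I would split the defining sum of $f_a$ according to whether $i_1 = 1$ or $i_1 \geq 2$. When $i_1 = 1$, the factor for $T_1$ reduces to $\prod_{i=2}^n(T_1 - \alpha_i)$, and the remaining indices $(1 = i_1 \leq i_2 \leq \cdots \leq i_r = n)$ reparametrize the defining sum for $f_a$ in rank $r - 1$ at the variables $T_2, \ldots, T_r$. When $i_1 \geq 2$, the interval $[i_{j-1}, i_j]$ excludes the index $1$ for every $j \geq 2$, so each such factor acquires a copy of $(T_j - \alpha_1)$; pulling these out exhibits the remainder as the defining sum for $f_{a/(T - \alpha_1)}$ in rank $r$.

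The heart of the argument is part (3), which I would prove by induction on $n + r$, with base cases from Example \ref{fainitialcases}. Since the transpositions $(1\,k)$ for $k = 2, \ldots, n$ generate $S_n$, it suffices to show $f_a^{(1\,k)} = f_a$ for each $k$. Expanding $f_a$ (splitting off $\alpha_1$) and $f_a^{(1\,k)}$ (splitting off $\alpha_k$, which plays the role of the first root in the permuted ordering) via the $l = 1$ recursion, the rank-$(r-1)$ polynomials $f_a(T_2,\ldots,T_r)$ and $f_a^{(1\,k)}(T_2,\ldots,T_r)$ coincide by the IH, so the difference $f_a - f_a^{(1\,k)}$ collapses to
\[ \prod_{j=2}^r(T_j - \alpha_1)\, f_{a/(T-\alpha_1)} - \prod_{j=2}^r(T_j - \alpha_k)\, f_{a/(T-\alpha_k)} + (\alpha_1 - \alpha_k)\prod_{i \neq 1,k}(T_1-\alpha_i)\, f_a(T_2,\ldots,T_r). \]
I would next expand $f_{a/(T - \alpha_1)}$ and $f_{a/(T - \alpha_k)}$ via the $l = 1$ recursion with the other root split off (valid by the IH since their degree is $n - 1$); the doubly-split contributions both equal $\prod_{j=2}^r(T_j-\alpha_1)(T_j-\alpha_k)\, f_{a/((T-\alpha_1)(T-\alpha_k))}$ and cancel. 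After factoring out $\prod_{i\neq 1,k}(T_1 - \alpha_i)$, vanishing of the difference reduces to the rank-$(r-1)$ identity
\[ (\alpha_k - \alpha_1)\, f_a(T_2,\ldots,T_r) = \prod_{j=2}^r(T_j - \alpha_1)\, f_{a/(T-\alpha_1)}(T_2,\ldots,T_r) - \prod_{j=2}^r(T_j - \alpha_k)\, f_{a/(T-\alpha_k)}(T_2,\ldots,T_r), \]
obtained by applying the rank-$(r-1)$ $l = 1$ recursion to $f_a(T_2,\ldots,T_r)$ with $\alpha_1$ and $\alpha_k$ split off respectively, multiplying by $(T_2 - \alpha_1)$ and $(T_2 - \alpha_k)$, and subtracting, noting that the common term $a(T_2)\, f_a(T_3,\ldots,T_r)$ cancels.

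With (3) in hand, (2) follows quickly: the $l=1$ recursion together with the IH shows symmetry of $f_a$ in $T_2, \ldots, T_r$, while the reindexing $i_k \mapsto n+1-i_{r-k}$ and $j \mapsto r+1-j$ of the defining sum yields $f_a^{\sigma_{\mathrm{rev}}}(T_1,\ldots,T_r) = f_a(T_r,\ldots,T_1)$ for the reversal $\sigma_{\mathrm{rev}}(k) = n + 1 - k$; by (3) the left side equals $f_a(T_1,\ldots,T_r)$, giving invariance under total $T$-reversal. Combined with the $S_{r-1}$-action on $T_2,\ldots,T_r$, this generates the full $S_r$. Part (1) for general $l$ then follows by applying the symmetry $T_1 \leftrightarrow T_l$ from (2) to the $l = 1$ recursion. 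Finally, (4) is a direct consequence of (3): when $a \in A$, the Galois group $\Gal(\overline{\F}_q/\F_q)$ permutes the roots $\alpha_i$, so (3) makes $f_a$ Galois-invariant, forcing its coefficients into $\F_q$. The main obstacle throughout is orchestrating the double expansion in part (3) so that the symmetric doubly-split contributions cancel cleanly.
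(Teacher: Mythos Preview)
Your argument is correct, but it follows a genuinely different route from the paper's.

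\textbf{What the paper does.} The paper first establishes (1) for \emph{all} $l$ simultaneously by invoking Corollary~\ref{congruence2} (the congruence $(T_l-\alpha_1)f_a \equiv \prod_j(T_j-\alpha_1)f_{a/(T-\alpha_1)} \pmod{I}$) and then upgrading the congruence to an exact identity by a degree comparison: the only way the two sides can differ by an element of $I$ is through a multiple of $a(T_l)$, and that multiple is identified as $a(T_l)f_a(T_1,\ldots,\widehat{T}_l,\ldots,T_r)$. With (1) in hand for every $l$, (2) follows by induction (choosing $l$ and $\sigma(l)$), and (3) is handled with a different generating set for $S_n$: the stabilizer of $1$ (where the recursion applies directly) together with the single reversal $\rho$, for which the reindexing identity $f_a^{\rho}(T_1,\ldots,T_r)=f_a(T_r,\ldots,T_1)$ combines with the already-proven symmetry (2). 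So the paper's logical order is $(1)\Rightarrow(2)\Rightarrow(3)$.

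\textbf{What you do differently.} You bypass the congruence machinery entirely: your $l=1$ case of (1) is a direct combinatorial split of the defining sum on whether $i_1=1$ or $i_1\geq 2$, and you then run the induction in the order $(1,\,l{=}1)\Rightarrow(3)\Rightarrow(2)\Rightarrow(1,\text{ general }l)$. For (3) you use the transpositions $(1\,k)$ as generators and carry out a double expansion that cancels the symmetric ``doubly-split'' term $\prod_{j\geq 2}(T_j-\alpha_1)(T_j-\alpha_k)\,f_{a/((T-\alpha_1)(T-\alpha_k))}$, reducing the claim to a clean rank-$(r-1)$ identity. The reversal reindexing identity appears in both proofs but is used in opposite directions: the paper uses (2) to finish (3), while you use (3) to finish (2).

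\textbf{Trade-offs.} Your approach is self-contained within \S3: it never appeals to Lemma~\ref{congruence} or Corollary~\ref{congruence2}. The price is the more intricate double-expansion in (3), and you should remark that the boundary cases $n=2$ (where $f_{a/((T-\alpha_1)(T-\alpha_k))}$ is not defined, but $f_{a/(T-\alpha_1)}=f_{a/(T-\alpha_k)}=1$ so one passes directly to the rank-$(r-1)$ identity) and $r=2$ (where the rank-$(r-1)$ identity is the trivial $(\alpha_k-\alpha_1)=(T_2-\alpha_1)-(T_2-\alpha_k)$) fall out immediately. The paper's route is shorter once Lemma~\ref{congruence} is available, and since that lemma is needed anyway in \S4 for the Weil-pairing properties, there is no real extra cost.
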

\begin{proof}
	(1) We proved in Corollary \ref{congruence2} that for all $l\in \{1,\ldots,r\}$,
	\begin{align*}
	(T_l-\alpha_1)f_a(T_1,\ldots,T_r) \equiv \bigg(\prod_{j=1}^r(T_j-\alpha_1)\bigg) f_{a\slash(T-\alpha_1)}(T_1,\ldots,T_r) \quad (\text{mod} \; I),
	\end{align*}
	where $I$ is the ideal of $\overline{\F}_q[T_1,\ldots,T_r]$ generated by $\{a(T_j): 1\leq j \leq r\}$. Any nonzero polynomial in $I$ has degree at least $n$ in at least one of its variables, and the polynomials $f_a$ and $f_{a/(T-\alpha_1)}$ have degree in all variables of $<n$ and $<n-1$ respectively. We can therefore compare the degrees of the polynomials in the above congruence to conclude that
	\begin{align*}
	(T_l-\alpha_1)f_a(T_1,\ldots,T_r) =& \bigg(\prod_{j=1}^r(T_j-\alpha_1)\bigg) f_{a\slash(T-\alpha_1)}(T_1,\ldots,T_r) \\&+ a(T_l) \sum_{\substack{1=i_0\leq\cdots \leq i_r =n\\i_{l-1}=i_l}} \prod_{\substack{j=1\\j\neq l}}^r \frac{a(T_j)}{(T_j-\alpha_{i_{j-1}})\cdots (T_j-\alpha_{i_j})}\\
	=&\bigg(\prod_{j=1}^r(T_j-\alpha_1)\bigg) f_{a\slash(T-\alpha_1)}(T_1,\ldots,T_r) \\&+a(T_l)f_a(T_1,\ldots,\widehat{T}_l,\ldots,T_r).
	\end{align*}
	Since $\overline{\F}_q[T_1,\ldots,T_r]$ is an integral domain, we can cancel a factor of $(T_l-\alpha_1)$ from both sides of the equation to arrive at
	\begin{align*}
	f_a(T_1,\ldots,T_r) = \bigg(\prod_{\substack{j=1\\j\neq l}}^r(T_j-\alpha_1)\bigg) f_{a\slash(T-\alpha_1)}(T_1,\ldots,T_r) &+\bigg(\prod_{i=2}^n(T_l-\alpha_i)\bigg) f_a(T_1,\ldots,\widehat{T}_l,\ldots,T_r),
	\end{align*}
	which completes the proof of (1).\\[1 em]
	(2) We will now show that statement (1) implies that $f_a(T_1,\ldots,T_r)$ is symmetric in $T_1,\ldots T_r$ for $a$ of degree $n\in \N$. We proceed by induction on the pair $(r,n)$. From Example \ref{fainitialcases}, we know that the statement is true when either $r=1$ or $n=1$. For the inductive hypothesis, let $r,n\in \N$ and assume we know that $f_a(T_1,\ldots,T_{r-1})$ is symmetric in $T_1,\ldots, T_{r-1}$ and that $f_b(T_1,\ldots,T_r)$ is symmetric in $T_1,\ldots,T_r$ whenever $b$ is a monic divisor of $a$ with degree $n-1$, where $f_b$ is defined using the same ordering of roots as $f_a$. Let $\sigma\in S_r$ be a permutation on $r$ elements. Since (1) holds for both $l$ and $\sigma(l)$, we can employ the inductive hypothesis to arrive at
	\begin{align*}
	f_a(T_{\sigma(1)},\ldots, T_{\sigma(r)}) =& \bigg(\prod_{\substack{j=1\\j\neq l}}^r(T_{\sigma(j)}-\alpha_1)\bigg) f_{a\slash(T-\alpha_1)}(T_{\sigma(1)},\ldots,T_{\sigma(r)}) \\&+\bigg(\prod_{i=2}^n(T_{\sigma(l)}-\alpha_i)\bigg) f_a(T_{\sigma(1)},\ldots,\widehat{T}_{\sigma(l)},\ldots,T_{\sigma(r)})\\
	=&\bigg(\prod_{\substack{j=1\\j\neq \sigma(l)}}^r(T_j-\alpha_1)\bigg) f_{a\slash(T-\alpha_1)}(T_1,\ldots,T_r) \\&+\bigg(\prod_{i=2}^n(T_{\sigma(l)}-\alpha_i)\bigg) f_a(T_1,\ldots,\widehat{T}_{\sigma(l)},\ldots,T_r)\\
	=&f_a(T_1,\ldots,T_r).
	\end{align*}
	Therefore, $f_a(T_1,\ldots, T_r)$ is symmetric in $T_1,\ldots, T_r$. By induction, we are done.\\[1 em]
	(3) We proceed again by induction on pairs $(r,n).$ If $r=1$ or $n=1$ the statement is true. For the induction hypothesis, let $ r\geq 1$ and $a$ have degree $n\geq 1$ and assume that in rank $r-1$, $f_a^\sigma = f_a$ is true for all $\sigma\in S_n$, and that in rank $r$, $f_b^{\sigma} =f_b$ is true for all $\sigma\in S_{n-1}$ whenever $b$ is a monic divisor of $a$ of degree $n-1$ and $f_b$ and $f_{b}^\sigma$ are defined using the same ordering of roots as $f_a$. \par
	First, if $\sigma\in S_n$ is such that $\sigma(1)=1$ then $\sigma$ restricts to a permutation of the roots of the polynomial $a\slash (T-\alpha_1)$. Applying the formula in statement (1) for $l=1$, 
	\begin{align*}
	f_a^{\sigma}(T_1,\ldots, T_r) =& \bigg(\prod_{j=2}^r(T_j-\alpha_{\sigma(1)})\bigg) f_{a\slash(T-\alpha_{1})}^\sigma(T_1,\ldots,T_r) \\&+\bigg(\prod_{i=2}^n(T_1-\alpha_{\sigma(i)})\bigg) f_a^\sigma(T_2,\ldots,T_r).
	\end{align*}
	By the induction hypothesis,
	\begin{align*}
	f_{a\slash(T-\alpha_{1})}^\sigma(T_1,\ldots,T_r) = f_{a\slash(T-\alpha_{1})}(T_1,\ldots,T_r) \quad\text{and}\quad f_a^\sigma(T_2,\ldots,T_r) = f_a(T_2,\ldots,T_r).
	\end{align*} 
	Since $\sigma(1)=1$ and $\sigma$ permutes the elements of $\{2,\ldots, n\}$, we arrive at
	\begin{align*}
	f_a^{\sigma}(T_1,\ldots, T_r) =& \bigg(\prod_{j=2}^r(T_j-\alpha_{1})\bigg) f_{a\slash(T-\alpha_{1})}(T_1,\ldots,T_r) +\bigg(\prod_{i=2}^n(T_1-\alpha_{i})\bigg) f_a(T_2,\ldots,T_r)\\
	=&f_a(T_1,\ldots,T_r),
	\end{align*}	
	which we've now proved whenever $\sigma(1)=1$. Now suppose $\rho\in S_n$ is the permutation with $\rho(j)= (n-j+1)$ for all $1\leq j\leq n$, i.e. $\rho$ is the permutation that reverses the order of the roots of $a$. Then,
	\begin{align*}
	f_a^\rho(T_1,\ldots,T_r) &=\sum_{1=i_0\leq\cdots \leq i_r =n} \prod_{j=1}^r \frac{a(T_j)}{(T_j-\alpha_{\rho(i_{j-1})})\cdots (T_j-\alpha_{\rho(i_j)})}\\
	&=\sum_{1=i_0\leq\cdots \leq i_r =n} \prod_{j=1}^r \frac{a(T_{n-j+1})}{(T_{n-j+1}-\alpha_{i_{j-1}})\cdots (T_{n-j+1}-\alpha_{i_j})}\\
	&=f_a(T_r,\ldots,T_1).
	\end{align*}
	Since $f_a(T_1,\ldots,T_r)$ is symmetric in $T_1,\ldots,T_r$ by (3), it follows that $f_a^\rho(T_1,\ldots,T_r)=f_a(T_1,\ldots,T_r)$.\par
	We've now proven that $f_a^\sigma(T_1,\ldots,T_r)=f_a(T_1,\ldots,T_r)$ when either $\sigma(1)=1$ or $\sigma=\rho$. The set of $\sigma$ which fix $f_a$ is closed under multiplication since $f_a^{\sigma_1\sigma_2} = (f_a^{\sigma_2})^{\sigma_1}$. To complete the proof, take note that the subgroup of $S_n$ generated by $\rho$ and all $\sigma$ with $\sigma(1)=1$ includes all transpositions and therefore must be all of $S_n$.\\[1 em]
	(4) Let $\sigma\in \Gal(\overline{\F}_q\slash \F_q)$. Then, $\sigma$ acts on $\overline{\F}_q[T_1,\ldots,T_r]$ as an automorphism of rings by acting on the coefficients of a given polynomial. If the coefficients of $a$ belong to $\F_q$, $\sigma$ permutes the roots of $a$. Thus, we can apply statement (4) in order to arrive at
	\begin{align*}
	\sigma(f_a(T_1,\ldots,T_r)) = f_a^\sigma(T_1,\ldots,T_r) =f_a(T_1,\ldots,T_r).
	\end{align*}
	We conclude that $f_a\in \F_q[T_1,\ldots,T_r]$.
\end{proof}\par
Since $f_a$ does not depend on the order in which we labeled the roots $\alpha_1,\ldots,\alpha_n$, it is now clear that we can strengthen the claim of Corollary \ref{congruence2}, leading us to the following corollary.
\begin{corollary}\label{congruence updated}
	For $l\in \{1,\ldots,r\}$ and $\alpha$ any root of $a$, the following congruence holds 
	\begin{align*}
	(T_l-\alpha)f_a(T_1,\ldots,T_r) \equiv \bigg(\prod_{j=1}^r(T_j-\alpha)\bigg) f_{a\slash(T-\alpha)}(T_1,\ldots,T_r) \quad (\text{\emph{mod}} \; I).
	\end{align*}
\end{corollary}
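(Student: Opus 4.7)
The plan is to reduce the statement directly to Corollary \ref{congruence2} using the fact, established in Theorem \ref{fatheorem}(3), that $f_a$ is independent of the ordering used to label the roots of $a$. Fix $\alpha$ a root of $a$, and write $\alpha = \alpha_k$ for some $k \in \{1,\ldots,n\}$.

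First I would observe that the ideal $I = (a(T_1),\ldots,a(T_r))$ depends only on $a$ itself and not on any labeling of its roots, so relabeling the roots does not change the ambient quotient. Next I would choose a permutation $\sigma \in S_n$ with $\sigma(1) = k$ and consider the relabeled roots $\alpha_i' := \alpha_{\sigma(i)}$, so that $\alpha_1' = \alpha$ while $\alpha_2', \ldots, \alpha_n'$ enumerate the remaining roots of $a$ (equivalently, the roots of $a/(T-\alpha)$) in some order.

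Then I would apply Corollary \ref{congruence2} using the relabeled roots, with $\alpha_1' = \alpha$ playing the role of $\alpha_1$. Calling $\widetilde{f}_a$ and $\widetilde{f}_{a/(T-\alpha)}$ the polynomials computed from this relabeling, Corollary \ref{congruence2} yields
\begin{align*}
(T_l - \alpha)\,\widetilde{f}_a(T_1,\ldots,T_r) \equiv \bigg(\prod_{j=1}^r (T_j - \alpha)\bigg)\,\widetilde{f}_{a/(T-\alpha)}(T_1,\ldots,T_r) \pmod{I}.
\end{align*}
Invoking Theorem \ref{fatheorem}(3) twice gives $\widetilde{f}_a = f_a$ and $\widetilde{f}_{a/(T-\alpha)} = f_{a/(T-\alpha)}$, and substituting produces the claimed congruence.

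There is no substantive obstacle here; the corollary is genuinely a bookkeeping observation on top of Corollary \ref{congruence2}. The only point that requires care is the compatibility of the hypothesis in Corollary \ref{congruence2}, namely that $f_{a/(T-\alpha_1)}$ is defined using the same ordering as $f_a$: this compatibility is automatic precisely because Theorem \ref{fatheorem}(3) tells us both polynomials are ordering-independent, so in the relabeled setup we are free to let the induced ordering of the $n-1$ remaining roots of $a/(T-\alpha)$ be whatever $\sigma$ dictates without affecting the output.
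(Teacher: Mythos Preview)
Your proposal is correct and is exactly the argument the paper has in mind: the paper does not give a separate proof for this corollary but simply remarks, immediately before stating it, that since $f_a$ is independent of the ordering of the roots (Theorem \ref{fatheorem}(3)), one may strengthen Corollary \ref{congruence2} by replacing $\alpha_1$ with an arbitrary root $\alpha$. Your write-up spells out precisely this relabeling and the two invocations of ordering-independence, and handles the compatibility hypothesis of Corollary \ref{congruence2} correctly.
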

\begin{example}\label{farank2}
	We compute $f_a$ when $r=2$. We'll show that
	\begin{align*}
	f_a(T_1,T_2)&= \sum_{i=1}^n a_i\sum_{j=1}^i T_1^{j-1}T_2^{i-j}. 
	\end{align*}
	Here, $a_i$ is the $i$-th coefficient of the polynomial $a=a_0+a_1T+\cdots+a_nT^n$. If $n=1$, then $f_a=1=g_a$. If $n\geq 2$, then the recursive formula proved in Theorem \ref{fatheorem} states that for $r=2$ and $l=1$,
	\begin{align*}
	 f_a(T_1,T_2) &= (T_2-\alpha_1) f_{a\slash(T-\alpha_1)}(T_1,T_2) +\bigg(\prod_{i=2}^n(T_1-\alpha_i)\bigg) f_a(T_2)\\
	 &=(T_2-\alpha_1) f_{a\slash(T-\alpha_1)}(T_1,T_2) +\bigg(\frac{a(T_1)}{(T_1-\alpha_1)}\bigg).
	\end{align*}
	We proceed by induction on $n$. For each monic divisor $b$ of $a$ of degree $n-1$, write $b=\sum_{i=0}^{n-1} b_iT^i$ and assume we've already shown that
	\begin{align*}
	f_b(T_1,T_2) = \sum_{i=1}^{n-1} b_i\sum_{j=1}^i T_1^{j-1}T_2^{i-j}. 
	\end{align*}
	In particular, this holds when $b=a\slash (T-\alpha_1)=\sum_{i=0}^{n-1} b_iT^i$, so we arrive at the following calculation.
	\begin{align*}
	f_a(T_1,T_2) &=(T_2-\alpha_1) \sum_{i=1}^{n-1} b_i\sum_{j=1}^i T_1^{j-1}T_2^{i-j} +b(T_1)\\
	&=\bigg((T_2-\alpha_1) \sum_{i=1}^{n-1} b_i\sum_{j=1}^i T_1^{j-1}T_2^{i-j}\bigg) + \bigg(\sum_{i=0}^{n-1} b_i T_1^i\bigg)\\
	&=\bigg(\sum_{i=1}^{n-1} b_i\sum_{j=1}^i T_1^{j-1}T_2^{i+1-j}\bigg) + \bigg(\sum_{i=0}^{n-1} b_i T_1^i \bigg)+ \bigg(\sum_{i=1}^{n-1} (-\alpha_1b_i)\sum_{j=1}^i T_1^{j-1}T_2^{i-j}\bigg)\\
	&=\bigg(\sum_{i=0}^{n-1} b_{i}\sum_{j=1}^{i+1} T_1^{j-1}T_2^{i+1-j}\bigg) + \bigg(\sum_{i=1}^{n-1} (-\alpha_1b_i)\sum_{j=1}^i T_1^{j-1}T_2^{i-j}\bigg) \\
	&= \bigg(\sum_{i=1}^{n} b_{i-1}\sum_{j=1}^{i} T_1^{j-1}T_2^{i-j}\bigg) + \bigg(\sum_{i=1}^{n-1} (-\alpha_1b_i)\sum_{j=1}^i T_1^{j-1}T_2^{i-j}\bigg).
	\end{align*}
	Recall that $b_{n-1}=1=a_n$. Since $(T-\alpha_1)b=a$, we must have $a_i=b_{i-1}-\alpha_1b_i$ when $1\leq i\leq n-1$. We conclude that
	\begin{align*}
	f_a(T_1,T_2)= \sum_{i=1}^{n} a_i \sum_{j=1}^iT_1^{j-1}T_2^{i-j}.
	\end{align*}
\end{example}
\section{The Main Result} Given $a\in A$, we now wish to write down a formula for the map $W_a:\prod_{i=1}^r \phi[a] \rightarrow \psi[a]$. Van der Heiden~\cite{vanderheiden} provided the following formula involving the Moore determinant $M$ when the rank $r$ of $\phi$ is 2 and $a$ written as $\sum_{i=0}^n a_iT^i$.
\begin{align*}
W_a(\beta_1,\beta_2)= \sum_{i=1}^n a_i \bigg(\sum_{j=1}^{i} M\big(\phi_{T^{j-1}}(\beta_1),\phi_{T^{i-j}}(\beta_2)\big)\bigg).
\end{align*}
We now write the same formula in terms of the linear map $\widehat{M}$ and the action of $\F_q[T_1,\ldots,T_r]$ on $\bigotimes_{i=1}^r {\overline{K}}$.
\begin{align*}
W_a(\beta_1,\beta_2)&= \widehat{M}\big(g_a \cdot ( \beta_1\otimes \beta_2)\big),\\ g_a(T_1,T_2)&= \sum_{i=1}^n a_i\sum_{j=1}^i T_1^{j-1}T_2^{i-j}.
\end{align*}
Notice that $g_a(T_1,T_2) = f_a(T_1,T_2)$ as in Example \ref{farank2}.\par
We generalize the formula to arbitrary rank $r$ using the polynomials $f_a$ developed in the previous section: Let $M$ be the Moore determinant in rank $r$. Let $a\in A$ be a monic polynomial such that $\deg(a)\geq 1$. Recall that in this case, $f_a\in \F_q[T_1,\ldots,T_r]$, so we can define the following.
\begin{align*}
W_a:(\overline{K})^r&\rightarrow \overline{K}\\
W_a(\beta_1,\ldots,\beta_r)&= \widehat{M}(f_a\cdot(\beta_1\otimes \cdots\otimes \beta_r)).
\end{align*}
In other words, denote
\begin{align*}
f_a(T_1,\ldots, T_r) = \sum_{\substack{\mathbf{i}=(i_1,\ldots,i_r)\\0\leq i_1,\ldots, i_r\leq n-1}} a_\mathbf{i}T_1^{i_1}\cdots T_r^{i_r}, \qquad a_\mathbf{i}\in \F_q.
\end{align*} 
Then, $W_a$ is the following polynomial, which belongs to $K\langle x_1,\ldots,x_r\rangle$.
\begin{align*}
W_a(x_1,\ldots,x_r) = \sum_{\substack{\mathbf{i}=(i_1,\ldots,i_r)\\0\leq i_1,\ldots, i_r\leq n-1}} a_\mathbf{i}M(\phi_{T^{i_1}}(x_1),\ldots,\phi_{T^{i_r}}(x_r)).
\end{align*}
In the following lemma, we give an upper bound for the degree of $W_a(x_1,\ldots,x_r)$  in each variable $x_1,\ldots, x_r$ and show how the formula in one rank lower, $W_a(x_1,\ldots,x_{r-1})$, can be recovered from $W_a(x_1,\ldots,x_r)$.
\begin{lemma} \label{leadingterm}
	For monic $a\in A$ of degree $n$, the polynomial $W_a(x_1,\ldots,x_r)$ has degree $\leq q^{rn-1}$ in each variable $x_1,\ldots, x_r$. Furthermore, we can write \begin{align*}
		W_a(x_1,\ldots,x_r) = c W_a(x_1,\ldots,x_{r-1}) x_r^{q^{rn-1}} + h_a(x_1,\ldots,x_r).
	\end{align*}
	Here, $c=g_r^{n-1}$ for $g_r$ the leading coefficient of $\phi_T$, and $h_a(x_1,\ldots,x_r)$ is a polynomial which depends on $a$ and which has degree $<q^{rn-1}$ in $x_r$.
\end{lemma}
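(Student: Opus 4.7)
The plan is to analyze the formula
\[
W_a(x_1,\ldots,x_r) = \sum_{\mathbf{i}} a_{\mathbf{i}}\, M\bigl(\phi_{T^{i_1}}(x_1), \ldots, \phi_{T^{i_r}}(x_r)\bigr)
\]
term by term, tracking the degree of each summand in $x_r$ and then extracting the coefficient of the top power $x_r^{q^{rn-1}}$.

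First I would expand the Moore determinant as $M(y_1, \ldots, y_r) = \sum_{\sigma \in S_r} \text{sgn}(\sigma) \prod_{k=1}^r y_k^{q^{\sigma(k)-1}}$. Since $\phi_{T^{i_r}} = \phi_T^{i_r}$ has $\tau$-degree $r i_r$, the polynomial $\phi_{T^{i_r}}(x_r)$ has $x_r$-degree $q^{r i_r}$, so the slot $y_r^{q^{\sigma(r)-1}}$ contributes at most $q^{r i_r + \sigma(r) - 1}$ in $x_r$. Since $i_r \leq n-1$ and $\sigma(r) \leq r$, the overall bound is $q^{rn-1}$, proving the degree claim. Moreover this bound is attained precisely when $i_r = n-1$ and $\sigma(r) = r$.

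Next I would extract the coefficient of $x_r^{q^{rn-1}}$. Only summands with $i_r = n-1$ and permutations with $\sigma(r) = r$ contribute. For such $\sigma$, the restriction $\sigma|_{\{1, \ldots, r-1\}}$ ranges over $S_{r-1}$, and the inner sum reassembles into the rank-$(r-1)$ Moore determinant $M^{(r-1)}\bigl(\phi_{T^{i_1}}(x_1), \ldots, \phi_{T^{i_{r-1}}}(x_{r-1})\bigr)$. The top-$x_r$ coefficient of $\phi_{T^{n-1}}(x_r)^{q^{r-1}}$ is a constant $c$ obtained by raising the leading coefficient of $\phi_{T^{n-1}} = \phi_T^{n-1}$ to the power $q^{r-1}$. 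To finish, I would verify that the coefficient of $T_r^{n-1}$ in $f_a(T_1, \ldots, T_r)$ equals $f_a(T_1, \ldots, T_{r-1})$: inspecting the defining sum for $f_a$, the terms contributing to $T_r^{n-1}$ are exactly those with $i_{r-1} = n$ (so that the final factor becomes $a(T_r)/(T_r - \alpha_n)$, monic of degree $n-1$ in $T_r$), and the remaining sum over $1 = i_0 \leq \cdots \leq i_{r-1} = n$ is precisely the definition of $f_a$ in $r-1$ variables (with the same ordering of roots). This identifies the coefficient of $x_r^{q^{rn-1}}$ in $W_a(x_1, \ldots, x_r)$ as $c \cdot W_a(x_1, \ldots, x_{r-1})$, with the remaining lower-degree contributions absorbed into $h_a$.

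The main obstacle is purely bookkeeping: one must simultaneously track (i) the outer sum over $\mathbf{i}$ indexing monomials of $f_a$, (ii) the inner sum over $\sigma \in S_r$ in the Moore determinant, and (iii) the expansion of each $\phi_{T^{i_k}}(x_k)$ as an $\F_q$-linear polynomial. The cleanest way is to isolate the three compatible conditions ($i_r = n-1$, $\sigma(r) = r$, $i_{r-1} = n$ in the defining sum of $f_a$) that jointly characterize the top-$x_r$ contribution, and then observe independently that each residual object on the $M$-side and on the $f_a$-side is exactly the corresponding rank-$(r-1)$ construction.
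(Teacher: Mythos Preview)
Your approach is essentially the paper's. Where you expand $M$ via the Leibniz formula and isolate the permutations with $\sigma(r)=r$, the paper expands along the last row to obtain the leading term $x_r^{q^{r-1}}M(x_1,\ldots,x_{r-1})$; where you inspect the defining sum of $f_a$ to see that the $T_r^{n-1}$--coefficient is $f_a(T_1,\ldots,T_{r-1})$ (via the condition $i_{r-1}=n$), the paper reads the same fact off from the recursive formula of Theorem~\ref{fatheorem}(1) with $l=r$. These are equivalent bookkeeping choices and lead to the same conclusion.

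One point you leave open (and the paper glosses over as well): the lemma asserts $c=g_r^{\,n-1}$, but your own description of $c$ as ``the leading coefficient of $\phi_T^{\,n-1}$ raised to the power $q^{r-1}$'' actually yields
\[
c \;=\; \bigl(g_r^{\,1+q^r+\cdots+q^{(n-2)r}}\bigr)^{q^{r-1}} \;=\; g_r^{\,q^{r-1}+q^{2r-1}+\cdots+q^{(n-1)r-1}},
\]
since the leading coefficient of $\phi_T^{\,k}$ is $g_r^{\,1+q^r+\cdots+q^{(k-1)r}}$. This agrees with $g_r^{\,n-1}$ only when $g_r\in\F_q$. The discrepancy is harmless for the downstream use in Theorem~\ref{other properties}(3), which only needs $c\neq 0$; but if you are writing this up, you should either record the correct exponent or remark explicitly that only the nonvanishing of $c$ is required.
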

\begin{proof}
	Recall that the polynomial $f_a(T_1,\ldots,T_r)$ has degree $\leq n-1$ in each of its variables, $\phi_{T^i}$ has degree $q^{ir}$ for any $i\geq 0$, and the rank $r$ Moore determinant has degree $q^{r-1}$ in each variable. From the formula above, 
	\begin{align*}
	W_a(x_1,\ldots,x_r) = \sum_{\substack{\mathbf{i}=(i_1,\ldots,i_r)\\0\leq i_1,\ldots, i_r\leq n-1}} a_\mathbf{i}M(\phi_{T^{i_1}}(x_1),\ldots,\phi_{T^{i_r}}(x_r)),
	\end{align*}
	we conclude that $W_a(x_1,\ldots,x_r)$ has degree in each variable at most $q^{(n-1)r} \cdot q^{r-1} = q^{rn-1}$. Next, we recall our recursive formula for the case where $l=r$,
	\begin{align*}
	f_a(T_1,\ldots,T_r) = \bigg(\prod_{j=1}^{r-1}(T_j-\alpha_1)\bigg) f_{a\slash(T-\alpha_1)}(T_1,\ldots,T_r) &+\bigg(\prod_{i=2}^n(T_r-\alpha_i)\bigg) f_a(T_1,\ldots,T_{r-1}).
	\end{align*}
	 Considering the polynomial $f_a(T_1,\ldots,T_r)$ as a polynomial in $T_r$, its leading term is $f_a(T_1,\ldots,T_{r-1}) T_{r}^{n-1}$. Considering $M(x_1,\ldots,x_r)$ as a polynomial in $x_r$ and expanding down the last column of the determinant, we see that its leading term is $x_r^{q^{r-1}}M(x_1,\ldots,x_{r-1})$, where here we are also using $M$ to denote the rank $r-1$ Moore determinant.  Considering $W_a(x_1,\ldots,x_r)$ as a polynomial in $x_r$, it follows that the coefficient of the $x_r^{q^{rn-1}}$ term is given by
	 \begin{align*}
	 \widehat{M}(cf_a(T_1,\ldots,T_{r-1}) \cdot (x_1\otimes\ldots\otimes x_{r-1})) = cW_a(x_1,\ldots,x_{r-1}),
	 \end{align*}
	 where $c=g_r^{n-1}$. This completes the proof.
\end{proof}

Recall that $\psi_T=\gamma(T) + (-1)^{r-1}g_r\tau$, where $g_r$ is the leading coefficient of $\phi_T$.
\begin{theorem} \label{compatibility}
	Let $a,b\in A$ be monic polynomials with degree $\geq 1$. Then, the following condition holds whenever $\beta_1,\ldots,\beta_r\in \phi[ab]$:
	\begin{align*}
	\psi_{b}\big(W_{ab}(\beta_1,\ldots,\beta_r)\big)= W_a(\phi_b(\beta_1),\ldots,\phi_b(\beta_r)).
	\end{align*}
\end{theorem}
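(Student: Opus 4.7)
The plan is to reformulate both sides of the identity as applications of $\widehat{M}$ to elements of $\bigl(\bigotimes_{i=1}^r\overline{K}\bigr)^*$ and then bridge them using the congruence structure of \S3. First I would iterate Corollary~\ref{congruence updated} over the roots $\beta_1,\ldots,\beta_m\in\overline{\F}_q$ of $b$ to establish
\[
b(T_l)\,f_{ab}(T_1,\ldots,T_r) \equiv \prod_{j=1}^r b(T_j)\cdot f_a(T_1,\ldots,T_r)\pmod{I_{ab}}
\]
for every $l\in\{1,\ldots,r\}$, where $I_{ab}$ is the ideal generated by $\{(ab)(T_j):1\leq j\leq r\}$. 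The iteration works because, after $k-1$ steps, the accumulated error lies in $I_{ab/(T-\beta_1)\cdots(T-\beta_{k-1})}$, but multiplication by the already-collected factor $\prod_j(T_j-\beta_i)$ from the earlier steps absorbs this back into $I_{ab}$ via the identity $(T_j-\beta)\cdot (ab/(T-\beta))(T_j)=(ab)(T_j)$.

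Since each $\beta_i\in\phi[ab]$, the ideal $I_{ab}$ annihilates $\beta^{\otimes}:=\beta_1\otimes\cdots\otimes\beta_r$, so the above congruence becomes an honest equality after acting on $\beta^{\otimes}$. Applying $\widehat{M}$ turns the right-hand side into $W_a(\phi_b(\beta_1),\ldots,\phi_b(\beta_r))$, and the theorem reduces to proving
\[
\psi_b\bigl(\widehat{M}(f_{ab}\cdot\beta^{\otimes})\bigr) = \widehat{M}\bigl(b(T_l)\,f_{ab}\cdot\beta^{\otimes}\bigr). \qquad (\star)
\]

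For $(\star)$ I would induct on $\deg b$. The inductive step is multiplicativity: if $(\star)$ holds for $b_1$ and $b_2$ separately, then it holds for $b_1b_2$, since $\psi_{b_1b_2}=\psi_{b_1}\psi_{b_2}$, $\phi_{b_1b_2}=\phi_{b_1}\phi_{b_2}$, and $\phi_{b_2}(\beta)\in\phi[ab_1]$ whenever $\beta\in\phi[ab_1b_2]$, so one may chain two applications of compatibility. Factoring $b$ over $\overline{\F}_q$ and invoking the extended Drinfeld action from \S2, we reduce everything to the base case $b=T-\alpha$ for $\alpha\in\overline{\F}_q$. In that case the $\alpha$-pieces on both sides of $(\star)$ cancel, and it remains to show
\[
\psi_T\bigl(\widehat{M}(z)\bigr) = \widehat{M}(T_l\,z), \qquad z=f_{a(T-\alpha)}\cdot\beta^{\otimes},\ \beta\in\phi[a(T-\alpha)]^r.
\]
I would prove this base case by combining the auxiliary Moore-determinant identity
\[
\det\bigl(\phi_T(\mathbf{x}),\,\mathbf{x}^q,\,\ldots,\,\mathbf{x}^{q^{r-1}}\bigr) = \psi_T\bigl(M(\mathbf{x})\bigr),
\]
which follows from a single round of column operations on the Moore determinant using $\phi_T=\gamma(T)+g_1\tau+\cdots+g_r\tau^r$ and $\psi_T=\gamma(T)+(-1)^{r-1}g_r\tau$, with Lemma~\ref{congruence} (which supplies the symmetry $T_lz=T_hz$) and the torsion relations $\phi_{a(T-\alpha)}(\beta_i)=0$; these last two ingredients allow the row-wise $T_l$-perturbation to be rewritten in a form accessible to the column-wise identity above.

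The main obstacle is this base case. The iterated congruence is essentially formal once Corollary~\ref{congruence updated} is in hand, and the multiplicative reduction is routine. The delicate part is aligning $\widehat{M}(T_lz)$, which is a \emph{row-wise} perturbation of the Moore determinant, with $\psi_T\widehat{M}(z)$, whose natural description via the auxiliary identity is a \emph{column-wise} modification; this reconciliation requires careful bookkeeping with the coefficients of $f_a$ from Theorem~\ref{fatheorem} and succeeds only because of the torsion constraints imposed by $\beta\in\phi[a(T-\alpha)]^r$.
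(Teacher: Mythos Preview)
Your proposal is correct and uses the same ingredients as the paper: the column-operation identity $\psi_T(M(\mathbf{x}))=\det\bigl(\phi_T(\mathbf{x}),\mathbf{x}^q,\ldots,\mathbf{x}^{q^{r-1}}\bigr)$, its expansion along the first column into the minors $\widehat{M}_l$, the congruences of \S3, and iteration over the linear factors of $b$ in $\overline{\F}_q[T]$. The paper merges these in a single pass---applying Corollary~\ref{congruence updated} inside each $\widehat{M}_l$ immediately after the minor expansion, then iterating---so your separate derivation of the full iterated congruence $b(T_l)f_{ab}\equiv\prod_j b(T_j)\,f_a$ becomes redundant once you chain compatibility inductively (the base case only needs the single-factor version), but the argument goes through.
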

\begin{proof}
	First, notice that we have the following equality of polynomials in $K[x_1,\ldots,x_r]$.
	\begin{align*}
	\psi_T(M(x_1,\ldots,x_r))&=\gamma(T)\det\begin{pmatrix}
	x_1 & x_1^q &\cdots & x_1^{q^{r-1}}\\
	x_2 & x_2^q &\cdots & x_2^{q^{r-1}}\\
	\vdots & \vdots & \cdots &\vdots\\
	x_r & x_r^q & \cdots & x_r^{q^{r-1}}
	\end{pmatrix} + (-1)^{r-1}g_r \det\begin{pmatrix}
	x_1^q & x_1^{q^2} &\cdots & x_1^{q^{r}}\\
	x_2^q & x_2^{q^2} &\cdots & x_2^{q^{r}}\\
	\vdots & \vdots & \cdots &\vdots\\
	x_r^q & x_r^{q^2} & \cdots & x_r^{q^{r}}
	\end{pmatrix}.
	\end{align*}
	Without changing the value of the expression, we can perform the following $r-1$ operations to the second determinant: Interchange the columns 1 and 2, then 1 and 3, and so on until we have interchanged the columns 1 and $r$, each time multiplying by a factor of $-1$. Now apply $K$-linearity in the first column of both determinants to arrive at
	\begin{align*}
	\psi_T(M(x_1,\ldots,x_r))&=\det\begin{pmatrix}
	\gamma(T)x_1 & x_1^q &\cdots & x_1^{q^{r-1}}\\
	\gamma(T)x_2 & x_2^q &\cdots & x_2^{q^{r-1}}\\
	\vdots & \vdots & \cdots &\vdots\\
	\gamma(T)x_r & x_r^q & \cdots & x_r^{q^{r-1}}
	\end{pmatrix} + \det\begin{pmatrix}
	g_r x_1^{q^r} & x_1^q &\cdots & x_1^{q^{r-1}}\\
	g_rx_2^{q^r} & x_2^q &\cdots & x_2^{q^{r-1}}\\
	\vdots & \vdots & \cdots &\vdots\\
	g_rx_r^{q^r} & x_r^q & \cdots & x_r^{q^{r-1}}
	\end{pmatrix}\\[.5 em]
	&= \det\begin{pmatrix}
	\gamma(T)x_1+g_r x_1^{q^r} & x_1^q &\cdots & x_1^{q^{r-1}}\\
\gamma(T)x_2+g_r x_2^{q^r} & x_2^q &\cdots & x_2^{q^{r-1}}\\
	\vdots & \vdots & \cdots &\vdots\\
	\gamma(T)x_r+g_r x_r^{q^r} & x_r^q & \cdots & x_r^{q^{r-1}}
	\end{pmatrix}
	\end{align*}
	We can now perform the following $r-1$ operations to this determinant without changing its value: For $2\leq i\leq r$, add $g_i$ times the $i$-th column to column 1. We conclude that
	\begin{align*}
	\psi_T(M(x_1,\ldots,x_r))&= \det\begin{pmatrix}
	\phi_T(x_1) &x_1^q&\cdots & x_1^{q^{r-1}}\\
	\phi_T(x_2) & x_2^q &\cdots & x_2^{q^{r-1}}\\
	\vdots & \vdots & \cdots &\vdots\\
	\phi_T(x_r) & x_r^q & \cdots & x_r^{q^{r-1}}
	\end{pmatrix}
	\end{align*}
	For $l\in \{1,\ldots,r\}$, consider the square submatrix of the following matrix achieved by removing its $l$-th row.
	\begin{align*}
	\begin{pmatrix}
	x_1^q & x_1^{q^2} &\cdots & x_1^{q^{r-1}}\\
	x_2^q & x_2^{q^2} &\cdots & x_2^{q^{r-1}}\\
	\vdots & \vdots & \cdots &\vdots\\
	x_r^q & x_r^{q^2} & \cdots & x_r^{q^{r-1}}
	\end{pmatrix}.
	\end{align*}
	Define $M_l\in K\langle x_1,\ldots,x_r\rangle$ by setting $M_l(x_1,\ldots,x_r)$ to be $x_l$ multiplied by the determinant of this submatrix. By expanding the Moore determinant down its first column, we see that
	\begin{align*}
	M(x_1,\ldots,x_r) = \sum_{l=1}^r (-1)^{l-1} M_l(x_1,\ldots,x_r).
	\end{align*}
	Similarly, letting $\beta_1,\ldots,\beta_r\in \overline{K}$ and expanding the determinant $\psi_T(M(\beta_1,\ldots,\beta_r))$ down its first column, we see that
	\begin{align*}
	\psi_T(M(\beta_1,\ldots,\beta_r)) = \sum_{l=1}^r (-1)^{l-1} \widehat{M}_l(T_l \cdot (\beta_1\otimes \cdots \otimes \beta_r)),
	\end{align*}
	where $\widehat{M}_l:\big(\bigotimes_{i=1}^r (\overline{K})\big)\rightarrow \overline{K}$ is the $\F_q$-linear map induced by $M_l$ and $\cdot$ is the action of $\F_q[T_1,\ldots,T_r]$ on $\bigotimes_{i=1}^r (\overline{K})$ described in \S 2. Also recall from \S 2 that $\psi$ extends to a map $A^*\rightarrow K\{\tau\}^*$, that $\widehat{M}$ and $\widehat{M_l}$ extend to $\overline{\F}_q$-linear maps $\big(\bigotimes_{i=1}^r (\overline{K})\big)^*\rightarrow \overline{K}^*$ and that $\cdot $ extends to an action of $\overline{\F}_q[T_1,\ldots,T_r]$ on $\big(\bigotimes_{i=1}^r (\overline{K})\big)^*$. For $\alpha\in \overline{\F}_q$, we have
	\begin{align*}
	\psi_{T-\alpha}\big(\widehat{M}(1\otimes \beta_1\otimes \cdots \otimes \beta_r)\big) =&\big(1\otimes \psi_T(M(\beta_1,\ldots,\beta_r))\big) -\big(\alpha \otimes M(\beta_1,\ldots,\beta_r)\big)\\
	=&\sum_{l=1}^r (-1)^{l-1} \widehat{M}_l(T_l \cdot (1\otimes \beta_1\otimes \cdots \otimes \beta_r))\\
	 & -\sum_{l=1}^r (-1)^{l-1} \widehat{M}_l(\alpha \otimes \beta_1\otimes \cdots \otimes \beta_r)\\
	 =& \sum_{l=1}^r (-1)^{l-1} \widehat{M}_l((T_l-\alpha) \cdot (1\otimes \beta_1\otimes \cdots \otimes \beta_r)).
	\end{align*}
	Assume $a\in \overline{\F}_q[T]$ has degree $\geq 1$. Let $\alpha\in \overline{\F}_q$ be a root of $a$. By $\overline{\F}_q$-linearity we have
	\begin{align}\label{applying psi eq}
	\psi_{T-\alpha}\big(\widehat{M}(f_a\cdot (1\otimes \beta_1\otimes \cdots \otimes \beta_r)) \big) = \sum_{l=1}^r (-1)^{l-1} \widehat{M}_l((T_l-\alpha)f_a \cdot (1\otimes \beta_1\otimes \cdots \otimes \beta_r)).
	\end{align}
	Recall from Corollary \ref{congruence updated} that for $I$ the ideal generated by $\{a(T_j):1\leq j\leq r\}$, and for each $l\in \{1,\ldots,r\}$, the following congruence holds.
	\begin{align} \label{congruence eq}
	(T_l-\alpha)f_a(T_1,\ldots,T_r) \equiv \bigg(\prod_{j=1}^r(T_j-\alpha)\bigg) f_{a\slash(T-\alpha)}(T_1,\ldots,T_r) \quad (\text{mod} \; I).
	\end{align}
	Suppose that $v\in \big(\bigotimes_{i=1}^r \overline{K}\big)^*$ is annihilated by $I$, that is, for all $g\in I$, $g\cdot v =0$. Then, combining equation (\ref{applying psi eq}) with congruence (\ref{congruence eq}) above,
	\begin{align*}
	\psi_{T-\alpha}\big(\widehat{M}(f_a\cdot v) \big) &= \sum_{l=1}^r (-1)^{l-1} \widehat{M}_l\bigg(\bigg(\prod_{j=1}^r(T_j-\alpha)\bigg) f_{a\slash(T-\alpha)} \cdot v  \bigg)\\
	&=\widehat{M}\bigg(f_{a\slash(T-\alpha)}\cdot\bigg(\bigg(\prod_{j=1}^r(T_j-\alpha)\bigg)  \cdot v\bigg)\bigg),
	\end{align*}
	where $\big(\prod_{j=1}^r(T_j-\alpha)\big)  \cdot v$ is annihilated by the ideal generated by $\{a(T_j)\slash(T_j-\alpha):1\leq j\leq r\}$. If $b$ is a monic divisor of $a$ with $\deg(b)\geq 1$, we can factor $b$ into linear factors and repeatedly apply the calculation above for each of these factors to conclude that
	\begin{align*}
	\psi_{b}\big(\widehat{M}(f_a\cdot v)\big) =\widehat{M}\bigg(f_{a\slash b}\bigg(\prod_{j=1}^r b(T_j)\bigg)  \cdot v\bigg).
	\end{align*}
	Finally, assume $a,b\in A$ are monic polynomials with degree $\geq 1$. Let $I$ be the ideal generated by $\{a(T_j)b(T_j):1\leq j\leq r\}$. Let $\beta_1,\ldots,\beta_r\in \phi[ab]$. Set $v=(1\otimes \beta_1\otimes \cdots\otimes \beta_r)\in \big(\bigotimes_{i=1}^r \overline{K}\big)^*$. Since $\beta_1,\ldots,\beta_r\in \phi[ab]$, $v$ is annihilated by $I$. Therefore,
	\begin{align*}
	\psi_{b}\big(W_{ab}(\beta_1,\ldots,\beta_r)\big)&=\psi_{b}\big(\widehat{M}(f_{ab}\cdot v)\big)\\
	&=\widehat{M}\bigg(f_{a}\bigg(\prod_{j=1}^r b(T_j)\bigg)  \cdot v\bigg)\\
	&=W_a(\phi_b(\beta_1),\ldots,\phi_b(\beta_r)).
	\end{align*}
	This completes the proof of Theorem \ref{compatibility}.
\end{proof}
	\begin{theorem} \label{other properties} The map $W_a:\prod_{i=1}^r \phi[a]\rightarrow \psi[a]$ has the following properties:\\[.5 em]
		(1) It is $A$-multilinear, i.e. it is $A$-linear in each component.\\[.5 em]
		(2) It is alternating: if $\beta_l=\beta_h$ for some $l\neq h$, then $W_a(\beta_1,\ldots,\beta_r) = 0$.\\[.5 em]
		(3) Assuming $a\not\in \ker(\gamma)$, it is surjective and nondegenerate:
		\begin{align*}
		\text{if }W_a(\beta_1,\ldots,\beta_r)=0 \quad\text{for all }\beta_1,\ldots,\beta_{i-1},\beta_{i+1},\ldots,\beta_r\in \phi[a], \quad\text{then } \beta_i=0.
		\end{align*}
		(4) It is Galois invariant:
		\begin{align*}
		\sigma W_a(\beta_1,\ldots,\beta_r) = W_a(\sigma\beta_1,\ldots,\sigma\beta_r) \quad \text{for all } \sigma\in \emph{Gal}(\overline{K}\slash K).
		\end{align*}		
	\end{theorem}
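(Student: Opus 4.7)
\textbf{Plan for Theorem \ref{other properties}.} I will handle the four properties in the order (4), (1), (2), (3); properties (4), (1), and (2) are short consequences of what is already established in \S3 together with the computation in the proof of Theorem \ref{compatibility}, while (3) is the main obstacle and will be proved by induction on $\deg(a)$. For (4), Theorem \ref{fatheorem}(4) gives $f_a\in\F_q[T_1,\ldots,T_r]$; since the coefficients of $\phi_{T^i}$ lie in $K$, so do the coefficients of $W_a$, and every $\sigma\in \Gal(\overline K/K)$ fixes them, hence commutes with evaluation. For (1), the $\F_q$-multilinearity is immediate from $W_a\in K\langle x_1,\ldots,x_r\rangle$. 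For the full $A$-linearity, I would reuse the key computation from the proof of Theorem \ref{compatibility}: with $v = 1\otimes\beta_1\otimes\cdots\otimes\beta_r$ annihilated by $I = \langle a(T_j):1\le j\le r\rangle$ (which holds once $\beta_j\in\phi[a]$), the identity
\[
\psi_{T-\alpha}(\widehat M(f_a\cdot v)) = \sum_{l=1}^r (-1)^{l-1}\widehat M_l((T_l-\alpha)f_a\cdot v),
\]
combined with the congruence $(T_l-\alpha)f_a\equiv (T_1-\alpha)f_a\pmod I$ from Lemma \ref{congruence}, collapses the right-hand side to $\widehat M((T_1-\alpha)f_a\cdot v)$. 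Because $T_j\cdot v$ is again annihilated by $I$, iterating and extending linearly in the scalar yields $\psi_c(W_a(\beta_1,\ldots,\beta_r)) = \widehat M(c(T_1)f_a\cdot v)$ for every $c\in A$; Lemma \ref{congruence} then lets one replace $T_1$ by any $T_i$, producing $\psi_c(W_a(\beta_1,\ldots,\beta_r)) = W_a(\beta_1,\ldots,\phi_c(\beta_i),\ldots,\beta_r)$.

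For (2), the plan is to pair the multi-indices $\mathbf i$ in the expansion $W_a(\beta_1,\ldots,\beta_r) = \sum_{\mathbf i} a_{\mathbf i}M(\phi_{T^{i_1}}(\beta_1),\ldots,\phi_{T^{i_r}}(\beta_r))$ with their image under the transposition $\sigma = (l\,h)$. By Theorem \ref{fatheorem}(2), $a_{\sigma(\mathbf i)} = a_{\mathbf i}$; when $\beta_l = \beta_h$, the Moore determinant for $\sigma(\mathbf i)$ differs from that for $\mathbf i$ by swapping the $l$-th and $h$-th rows, producing a sign, so the two paired terms cancel whenever $i_l\ne i_h$, while the fixed terms with $i_l = i_h$ vanish individually because $M$ then has a repeated row.

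Property (3) is where the real work lies. Combining (1) and (2), $W_a$ factors through $\bigwedge^r_{A/aA}\phi[a]\cong A/aA$ and lands in $\psi[a]\cong A/aA$ (both free of rank one, using $a\notin\ker\gamma$), so $W_a$ corresponds to multiplication by a scalar $\lambda\in A/aA$, and both nondegeneracy and surjectivity reduce to $\lambda\in (A/aA)^\times$. I would prove this by induction on $\deg(a)$. The base case $\deg(a) = 1$ is clean: $f_a \equiv 1$ so $W_a = M$, and $\phi[a]$ is an $r$-dimensional $\F_q$-vector space on which the Moore determinant is nonzero for any $\F_q$-basis. For the inductive step, factor $a = p\cdot a'$ with $p$ monic irreducible and $a'\ne 1$, and fix an $A/aA$-basis $v_1,\ldots,v_r$ of $\phi[a]$. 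A short check shows that $\phi_{a'}(v_1),\ldots,\phi_{a'}(v_r)$ form an $A/pA$-basis of $\phi[p]$: they span because $\phi_{a'}:\phi[a]\twoheadrightarrow\phi[p]$ is surjective, and a relation $\sum c_i\phi_{a'}(v_i) = 0$ forces $\sum c_iv_i\in\phi[a']$, hence $a\mid a'c_i$, hence $p\mid c_i$. Applying Theorem \ref{compatibility},
\[
\psi_{a'}(W_a(v_1,\ldots,v_r)) = W_p(\phi_{a'}(v_1),\ldots,\phi_{a'}(v_r)),
\]
which is a generator of $\psi[p]$ by the induction hypothesis, and in particular nonzero. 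Thus $W_a(v_1,\ldots,v_r)$ is not killed by $a/p$ for any irreducible $p\mid a$, so its annihilator in $A$ equals $(a)$, and it generates $\psi[a]$; equivalently, $\lambda$ is a unit, and surjectivity follows at once. The hardest technical point will be the basis identification for the $\phi_{a'}(v_i)$, which is the bridge that lets the compatibility relation of Theorem \ref{compatibility} drive the induction.
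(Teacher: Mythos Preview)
Your arguments for (1), (2), and (4) align with the paper's: $A$-multilinearity follows from Lemma~\ref{congruence} together with the expansion of $\psi_T\circ M$ established in the proof of Theorem~\ref{compatibility}; the alternating property uses the symmetry of $f_a$ (Theorem~\ref{fatheorem}(2)) to pair multi-indices and the alternation of $M$; and Galois invariance is immediate from $W_a\in K[x_1,\ldots,x_r]$.

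For (3) your route diverges from the paper's. The paper inducts on the rank $r$, using Lemma~\ref{leadingterm} to identify the leading $x_r^{q^{rn-1}}$ coefficient of $W_a(x_1,\ldots,x_r)$ with a nonzero multiple of the rank-$(r-1)$ expression $W_a(x_1,\ldots,x_{r-1})$; a counting argument ($|\phi[a]|=q^{rn}>q^{rn-1}$) then produces a nonvanishing value, and surjectivity is deduced from nondegeneracy together with $A$-multilinearity. Your idea---reduce everything to showing the scalar $\lambda\in(A/aA)^\times$ and prove this by induction on $\deg(a)$ via Theorem~\ref{compatibility}---is attractive, but as written the induction does not close: the step ``factor $a=p\cdot a'$ with $p$ monic irreducible and $a'\ne 1$'' is unavailable when $a$ itself is irreducible of degree $\ge 2$. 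In that case the only irreducible divisor is $p=a$, giving $a'=1$, so Theorem~\ref{compatibility} yields no reduction and you have not shown $W_a(v_1,\ldots,v_r)\ne 0$. To repair the argument you would need either a direct nonvanishing result for irreducible $a$ (which is exactly what Lemma~\ref{leadingterm} and the rank induction supply), or to peel off linear factors $T-\alpha$ with $\alpha\in\overline{\F}_q$ using the intermediate identity
\[
\psi_{T-\alpha}\big(\widehat M(f_a\cdot v)\big)=\widehat M\Big(f_{a/(T-\alpha)}\cdot\Big(\prod_{j=1}^r(T_j-\alpha)\Big)\cdot v\Big)
\]
from the proof of Theorem~\ref{compatibility}, after first extending the statements and proofs of (1)--(3) to monic $a\in\overline{\F}_q[T]$.
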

\begin{proof}
	(1) Let $i\in \{1,\ldots,r\}$. We will show the $A$-linearity in the $i$-th component. Clearly $W_a$ is $\F_q$-linear in each component, since $M$ is $\F_q$-linear in each component and $\tau: x\mapsto x^q$ is $\F_q$-linear. Notice that for $\beta_1,\ldots,\beta_r\in \phi[a]$,
	\begin{align*}
	W_a(\beta_1,\ldots,\beta_{i-1}, \phi_{T}(\beta_i), \beta_{i+1},\ldots,\beta_r)&=\widehat{M}(T_if_a\cdot (1\otimes\beta_1\otimes \cdots\otimes \beta_r))\\
	&=\sum_{l=1}^r (-1)^{l-1}\widehat{M}_l(T_if_a\cdot (1\otimes\beta_1\otimes \cdots\otimes \beta_r)).
	\end{align*}
	Recall from Lemma \ref{congruence} that $T_if_a \equiv T_l f_a$ modulo $I$ for all $l\in \{1,\ldots, r\}$. As in the previous proof, these congruences allow us to write
	\begin{align*}
	W_a(\beta_1,\ldots,\beta_{i-1}, \phi_{T}(\beta_i), \beta_{i+1},\ldots,\beta_r)&=\sum_{l=1}^r (-1)^{l-1}\widehat{M}_l(T_lf_a \cdot (1\otimes\beta_1\otimes \cdots\otimes \beta_r))\\
	&=\psi_T(\widehat{M}(f_a\cdot (1\otimes\beta_1\otimes \cdots\otimes \beta_r)))\\
	&=\psi_T(W_a(\beta_1,\ldots,\beta_r)).
	\end{align*}
	Applying $\phi_T$ to the $i$-th component $m$ times then yields
	\begin{align*}
W_a(\beta_1,\ldots,\beta_{i-1}, \phi_{T^m}(\beta_i), \beta_{i+1},\ldots,\beta_r)&=\psi_{T^m}(W_a(\beta_1,\ldots,\beta_r)),
	\end{align*}
	which holds for $m\geq 0$. Let $b\in \F_q[T]$, and write $b(T)=\sum_{j=0}^m b_jT^j$. Then, by $\F_q$-multilinearity of $W_a$, we have shown that
	\begin{align*}
	W_a(\beta_1,\ldots,\beta_{i-1}, \phi_{b}(\beta_i), \beta_{i+1},\ldots,\beta_r)
	&=\sum_{j=0}^m b_jW_a(\beta_1,\ldots,\beta_{i-1}, \phi_{T^j}(\beta_i), \beta_{i+1},\ldots,\beta_r)\\
	&=\sum_{j=0}^m b_j \psi_{T^j} (W_a(\beta_1,\ldots,\beta_r))\\
	&=\psi_b(W_a(\beta_1,\ldots,\beta_r)).
	\end{align*}
	We conclude that $W_a$ is $A$-multilinear.\\[.5 em]
	(2) To show $W_a$ is alternating, we will use the fact that the Moore determinant $M$ is alternating and the fact that $f_a$ is a symmetric polynomial. Write
	\begin{align*}
	f_a(T_1,\ldots, T_r) = \sum_{\substack{\mathbf{i}=(i_1,\ldots,i_r)\\0\leq i_1,\ldots, i_r\leq n-1}} a_\mathbf{i}T_1^{i_1}\cdots T_r^{i_r}, \qquad a_\mathbf{i}\in \F_q.
\end{align*}
Let $1\leq l,h\leq r$, $l\neq h$. Fix $0\leq i_1,\ldots,i_r\leq n-1$ and $\mathbf{i}=(i_1,\ldots,i_r)$ and let $\mathbf{i}'=(i_1',\ldots,i_r')$ be the tuple you get from swapping the $l$-th and $h$-th component of $\mathbf{i}$ so that $i_l'=i_h$ and $i_h'=i_l$ and $i_j'=i_j$ for $j\neq l,h$. The fact that $f_a$ is symmetric implies that we have equality of the corresponding coefficients of $f_a$, that is, $a_\mathbf{i}=a_{\mathbf{i}'}$. Thus, we can write
\begin{align*}
f_a(T_1,\ldots,T_r) =&\bigg( \sum_{\substack{\mathbf{i}=(i_1,\ldots,i_r)\\0\leq i_1,\ldots, i_r\leq n-1\\i_l=i_h}} a_\mathbf{i}T_1^{i_1}\cdots T_r^{i_r}\bigg)\\ &\hspace{.7 cm}+ \bigg(\sum_{\substack{\mathbf{i}=(i_1,\ldots,i_r)\\0\leq i_1,\ldots, i_r\leq n-1\\i_l<i_h}} a_\mathbf{i}(T_1^{i_1}\cdots T_r^{i_r}+T_1^{i_1'}\cdots  T_r^{i_r'})\bigg).
\end{align*}
	Let $\beta_1,\ldots,\beta_r\in \overline{K}$ be such that $\beta_l=\beta_h$. Then,
	\begin{align*}
	W_a(\beta_1,\ldots,\beta_r) =& \bigg( \sum_{\substack{\mathbf{i}=(i_1,\ldots,i_r)\\0\leq i_1,\ldots, i_r\leq n-1\\i_l=i_h}} a_\mathbf{i}M(\phi_{T^{i_1}}(\beta_1),\ldots,\phi_{T^{i_r}}(\beta_r))\bigg)\\ &\hspace{.7 cm}+ \bigg(\sum_{\substack{\mathbf{i}=(i_1,\ldots,i_r)\\0\leq i_1,\ldots, i_r\leq n-1\\i_l<i_h}} a_\mathbf{i}(M(\phi_{T^{i_1}}(\beta_1),\ldots,\phi_{T^{i_r}}(\beta_r))+M(\phi_{T^{i_1'}}(\beta_1),\ldots,\phi_{T^{i_r'}}(\beta_r)))\bigg).
	\end{align*} In each term of the first sum, the $l$-th and $h$-th components of $M(\phi_{T^{i_1}}(\beta_1),\ldots,\phi_{T^{i_r}}(\beta_r))$ are equal. In this case, since $M$ is alternating, $M(\phi_{T^{i_1}}(\beta_1),\ldots,\phi_{T^{i_r}}(\beta_r))=0$, and therefore the first sum is $0$. In each term of the second sum, $M(\phi_{T^{i_1'}}(\beta_1),\ldots,\phi_{T^{i_r'}}(\beta_r))$ is the expression that results from swapping the $l$-th and $h$-th components of the expression $M(\phi_{T^{i_1}}(\beta_1),\ldots,\phi_{T^{i_r}}(\beta_r))$. In this case, since $M$ is alternating, $M(\phi_{T^{i_1}}(\beta_1),\ldots,\phi_{T^{i_r}}(\beta_r))+M(\phi_{T^{i_1'}}(\beta_1),\ldots,\phi_{T^{i_r'}}(\beta_r))=0$, and therefore the second sum is 0. We conclude that $W_a(\beta_1,\ldots,\beta_r)=0$, so $W_a$ is alternating.\\[.5 em]
	(3) We proceed by induction on the rank $r$ in order to prove that $W_a(x_1,\ldots,x_r)$ is surjective and nondegenerate. If $r=1$, then $\phi = \psi$ and $W_a(x_1)$ is the identity map $\phi[a]\rightarrow \psi[a]$. In particular, it is nondegenerate and surjective. Now let $r$ be arbitrary and assume for induction that $W_a(x_1,\ldots,x_{r-1})$ is nondegenerate and surjective. Assume $a\not\in \ker(\gamma)$ has degree $n$. Then, the polynomial $\phi_a(x)$ is separable of degree $q^{rn}$, so its set of zeros, $\phi[a]\subset \overline{K}$, has $q^{rn}$ distinct elements. Fix $i$ with $1\leq i\leq r-1$ and assume $\beta_i\in \phi[a]$ is such that $W_a(\beta_1,\ldots,\beta_r)=0$ for all $\beta_1,\ldots,\beta_{i-1},\beta_{i+1},\ldots,\beta_r\in \phi[a]$. If $\beta_i\neq 0$, then by the inductive hypothesis, we can fix $\beta_1,\ldots,\beta_{i-1},\beta_{i+1},\ldots,\beta_{r-1}\in \phi[a]$ such that $W_a(\beta_1,\ldots,\beta_{r-1})$ is nonzero. Consider the polynomial in $\overline{K}[x]$ given by
	\begin{align*}
	g(x) =W_a(\beta_1,\ldots,\beta_{r-1},x).
	\end{align*}
	By Lemma \ref{leadingterm}, $g(x)$ has degree exactly $q^{rn-1}$. Since $\phi[a]$ has $q^{rn}$ distinct elements, there is some $\beta_r\in \phi[a]$ with $g(\beta_r)=W_a(\beta_1,\ldots,\beta_r)\neq 0$, which is a contradiction, so $\beta_i=0$. In the case where $i=r$, if $W_a(\beta_1,\ldots,\beta_r)=0$ for all $\beta_1,\ldots,\beta_{r-1}\in \phi[a]$, then since $W_a$ is alternating, $W_a(\beta_1,\ldots, \beta_{r-2},\beta_r,\beta_{r-1})=0$ for all $\beta_1,\ldots,\beta_{r-1}\in \phi[a]$. By the proof above, we see that $\beta_r=0$. This proves $W_a(\beta_1,\ldots,\beta_r)$ is nondegenerate. \par
	For surjectivity, assume for contradiction that the image of $W_a(x_1,\ldots,x_r)$ is not equal to $\psi[a]\cong A\slash aA$. Since $W_a$ is $A$-multilinear, the image must be isomorphic to a proper $A$-submodule of $A\slash aA $, i.e. $bA\slash aA $ for some $b\mid a$ with $\deg(b)\geq 1$. Then, the polynomial $b'=a\slash b$ is a polynomial with $\deg(b')<\deg(a)$ such that 
	\begin{align*}
	\psi_{b'}(W_a(\beta_1,\ldots,\beta_r)) = 0 \quad \text{for all}\quad \beta_1,\ldots,\beta_r\in \phi[a].
	\end{align*}
	By the $A$-multilinearity of $W_a$, this equation can be written as
	\begin{align*}
	W_a(\phi_{b'}(\beta_1),\beta_2,\ldots,\beta_r) = 0 \quad\text{for all}\quad \beta_1,\ldots,\beta_r\in \phi[a].
	\end{align*}
	By the nondegeneracy of $W_a$, this means $\phi_{b'}(\beta_1)= 0$ for all $\beta_1\in \phi[a].$ This is a contradiction, since $b'$ has strictly smaller degree than $a$ and $a$ is separable. We conclude by induction that $W_a(x_1,\ldots, x_r)$ is surjective and nondegenerate for all $r\geq 1$.\\[.5 em]
	(4) Since $W_a(x_1,\ldots,x_r)\in K[x_1,\ldots,x_r]$, it immediately follows that $W_a$ is Galois invariant.
\end{proof}
\bibliographystyle{plain}{}
\bibliography{weilpairingbibtex}

\end{document}